\documentclass[11pt]{article}

\usepackage{amsmath,amsthm,amsfonts,amssymb,amscd, amsxtra}

\oddsidemargin=0pt
\evensidemargin=0pt
\textwidth=6.5in

\headsep=1cm

\newcommand{\banacha}{\mathbb X}
\newcommand{\banachb}{\mathbb Y}
\newcommand{\norm}[1]{\|#1\|}
\newcommand{\Norm}[1]{\left\|#1\right\|}

\newcommand{\tildetheta}{\Theta}

\numberwithin{equation}{section}

\newtheorem{theorem}{Theorem}[section]
\newtheorem{lemma}[theorem]{Lemma}
\newtheorem{definition}[theorem]{Definition}
\newtheorem{corollary}[theorem]{Corollary}
\newtheorem{proposition}[theorem]{Proposition}

\newtheorem{remark}[theorem]{Remark}

\begin{document}
\title{A robust Kantorovich's theorem on inexact Newton method with relative residual error tolerance}
  
\author{ O. P. Ferreira\thanks{IME/UFG, Campus II- Caixa Postal 131,
    CEP 74001-970 - Goi\^ania, GO, Brazil (e-mail:{\tt
      orizon@mat.ufg.br}).  The author was supported in part by
    FUNAPE/UFG, CNPq Grant 473756/2009-9, CNPq Grant 302024/2008-5,
    PRONEX--Optimization(FAPERJ/CNPq) and IMPA.}  \and
  B. F. Svaiter\thanks{IMPA, Estrada Dona Castorina, 110, Jardim
    Bot\^anico, CEP 22460-320 - Rio de Janeiro, RJ, Brazil
    (e-mail:{\tt benar@impa.br}). The author was supported in part by
    CNPq Grant 301200/93-9(RN), CNPq Grant 475647/2006-8 and by
    PRONEX--Optimization(FAPERJ/CNPq).}  }
    
\date{ November 12, 2010 }

\maketitle
\begin{abstract}
We prove that under semi-local assumptions, the inexact Newton method with a
\emph{fixed} relative residual error tolerance converges $Q$-linearly to a zero
of the non-linear operator under consideration.
Using this result we show that Newton method for minimizing a self-concordant
function or to find a zero of an analytic function can be implemented with a
fixed relative residual error tolerance.

In the absence of errors, our analysis retrieve the classical Kantorovich
Theorem on Newton method.

\noindent
{{\bf Keywords:} Kantorovich's theorem,  Inexact Newton method,  Banach space.}

\noindent
MSC2010: 49M15, 90C30.
\end{abstract}

\section*{Introduction}\label{sec:int}
Newton's method and its variations, including the inexact Newton
methods, are the most efficient methods known for solving nonlinear
equations
\[
F(x)=0,
\]
where $\banacha$ and $\banachb$ are Banach spaces, $C\subseteq\banacha$ and
$F:{C}\to \banachb$ is continuous and continuously differentiable on $int(C)$. 

Kantorovich's Theorem on Newton's method uses semi-local assumption on $F$ to
guarantee existence of a solution of the above equation, uniqueness of this
solution in a prescribed region and also convergence of Newton's Method to such
a solution, see \cite{kantorovich1951,kantAkil1964}.
Semi-local convergence theorems for Newton method has been instrumental in the
modern complexity analysis of the solution of polynomial (or analytical)
equations \cite{BlunCukerShubSmale1989, Smale1986}, linear and quadratic
programming problems and linear semi-definite programming problems
\cite{NesterovNemirovskii1994, Potra2005}.
These convergence results has also been used in the design and convergence analysis of
algorithms for these problems.
In all these applications, homotopy methods are combined with Newton's method,
which helps the algorithm to keep track of the solution of a parametrized
perturbed version of the original problem. Each Newton iteration requires the
solution of a linear system, and this accounts mostly for the computational
burden of these algorithms.

Since linear system are solved always inexactly in floating point
computations, it is natural to investigate robustness of Kantorovich's
and Kantorovich's-like
theorems under errors in the computation of the Newton step.  Moreover,
modern implementations of the conjugate gradients, coupled with
preconditioning, allows for the approximated solution of large linear
systems. It would be most desirable to have an \emph{a priori}
prescribed residual error tolerance in the iterative solutions of
linear system for computing the Newton steps, because this would prevent
over-solving and/or under-solving the linear system in question.
Although the local convergence analysis of Newton's method with relative
errors in the residue~\cite{Chen2006, Dembo1982, Morini1999} or in the
steep~\cite{Ypma1984} are well understood, the convergence analysis of the method
under general semi-local assumptions
assuming \emph{only} bounded relative residual errors
is a new contribution of this paper.
Previous works on this subject
include ~\cite{Moret1989, Shen2009}.  The advantage of working with an
error tolerance on the residual rests in the fact that the exact
Newton step need not to be know for evaluating this error, which makes
this criterion attractive for practical applications.

Recently, Kantorovich's theorem on Newton's Method was extended to
Riemannian manifolds using a new technique which simplifies the
analyses  and proof of this theorem, see \cite{FerreiraSvaiter2002}.
After that, this technique was successfully employed for proving generalized
versions of Kantorovich's theorem in Riemannian Manifolds and also in
the analysis of the classical version of Kantorovich's theorem in Banach
spaces, see\cite{Alvarez2008, FerreiraSvaiter2009, Li2006, Li2008, 
Li2009, Wang2009, Wang2006, Wang2007978}.
In the present work, we will use the technique introduced in
\cite{FerreiraSvaiter2002} to present a robust version of the
Kantorovich's theorem on the inexact Newton method with residual
relative error. It is worth to point out  that, for null error tolerance the analysis 
presented merge in the usual semi-local convergence analysis on Newton's method, 
see~\cite{FerreiraSvaiter2009}. The basic idea is to find
good regions for the inexact Newton method.  In these regions, the
majorant function bounds the non-linear function which root is
to be found, and the behavior of the inexact Newton iteration in these
regions is estimated using iterations associated to the majorant
function.  Moreover, as a whole, the union of all these regions is
invariant under inexact Newton's iteration.

This paper is organized as follows.
In Section \ref{sec:int.2}, some definitions and auxiliary results are
presented.
In Section \ref{sec:rerror} the main result is stated and some properties of
the majorant function are established.
The main relationships between the majorant function and the nonlinear operator
used in the paper are presented in Section~\ref{sec:br}.
In Section~\ref{sec:inpso} a family of regions where the behavior of the
inexact Newton iteration is estimated using the majorant function is
introduced. We also show that the union of all these regions is invariant under
the inexact Newton iteration with a fixed relative residual error tolerance..
In Section~\ref{sec:cinmre} the main result is proved.
In Section \ref{sec:scinmer}  we show that Newton method for minimizing a
self-concordant function under the usual semi-local assumption for these
functions, can be implemented with a fixed residual error tolerance.
Moreover, we show that Newton method for finding a zero of an analytic
function, under the usual semi-local assumption of the $\alpha$-theory can be
also be implemented with a fixed relative residual error tolerance.

\section{Basics definitions and auxiliary results} \label{sec:int.2}

Let $\banacha$  be a Banach space. The open and closed ball at $x$ are denoted, respectively by
\[
B(x,r) = \{ y\in \banacha ;\; \|x-y\|<r \},\qquad B[x,r] =
\{ y\in \banacha ;\; \|x-y\|\leqslant r \}.
\]
The following auxiliary results of elementary convex analysis will be needed:

\begin{proposition}
  \label{pr:conv.aux1}
Let $I\subset \mathbb{R}$ be an interval, and $\varphi:I\to \mathbb{R}$ be convex.
\begin{enumerate}
\item
For any $u_0\in \mathrm{int}(I)$, 
the application
\[ u\mapsto \frac{\varphi(u_0)-\varphi(u)}{u_0-u},\qquad  u\in I, u\neq u_0,\]
is increasing and  there exist (in $\mathbb{R}$)
$$
D^- \varphi(u_0)={\lim}_{u\to u_0 ^-} \; \frac{\varphi(u_0)-\varphi(u)}{u_0-u}
={\sup}_{u<u_0} \;\frac{\varphi(u_0)-\varphi(u)}{u_0-u}. \\
$$
\item If $u,v,w\in I$, $u<w$, and $u\leq v\leq w$ then
$$  \varphi(v)-\varphi(u) \leq \left[\varphi(w)-\varphi(u)\right]  \frac{v-u}{w-u}.
$$
\end{enumerate}
\end{proposition}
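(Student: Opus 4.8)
The plan is to reduce both statements to the classical ``three-chords'' inequality for convex functions, and then read off the monotonicity, the existence of the one-sided derivative, and the chord bound as consequences. The only place where convexity enters is in establishing this inequality; everything afterward is order reasoning and elementary algebra.

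First I would prove the basic slope estimate: for any three points $a<b<c$ in $I$, write $b=\lambda a+(1-\lambda)c$ with $\lambda=(c-b)/(c-a)\in(0,1)$, so that convexity gives $\varphi(b)\le \lambda\varphi(a)+(1-\lambda)\varphi(c)$. Rearranging this single inequality (and multiplying through by the appropriate positive denominators) yields the chain
\[
\frac{\varphi(b)-\varphi(a)}{b-a}\ \le\ \frac{\varphi(c)-\varphi(a)}{c-a}\ \le\ \frac{\varphi(c)-\varphi(b)}{c-b}.
\]
Next, to prove item 1, I would set $s(u):=(\varphi(u_0)-\varphi(u))/(u_0-u)$, which is the slope of the chord joining $(u,\varphi(u))$ and $(u_0,\varphi(u_0))$ and is therefore symmetric in its two endpoints. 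Taking $u_1<u_2$ in $I\setminus\{u_0\}$, I would verify $s(u_1)\le s(u_2)$ by splitting into the three cases $u_1<u_2<u_0$, $u_0<u_1<u_2$, and $u_1<u_0<u_2$; in each case the desired inequality is exactly one link of the chain above applied to the relevant ordered triple. This establishes that $u\mapsto s(u)$ is increasing.

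Having monotonicity, I would then fix attention on $u<u_0$: as $u\uparrow u_0$ the quantity $s(u)$ is nondecreasing, so its left limit exists in $[-\infty,+\infty]$ and equals $\sup_{u<u_0}s(u)$. To see that this value is finite (i.e.\ lies in $\mathbb{R}$), I use that $u_0\in\mathrm{int}(I)$ to choose some $w\in I$ with $w>u_0$; monotonicity of $s$ then gives $s(u)\le s(w)<\infty$ for every $u<u_0$, so the supremum — and hence the left limit — is a real number, which is by definition $D^-\varphi(u_0)$. Item 2 is then immediate: for $u\le v\le w$ with $u<w$, the endpoint cases $v=u$ and $v=w$ are trivial, while for $u<v<w$ the middle term of the chain (with $a=u$, $b=v$, $c=w$) gives $(\varphi(v)-\varphi(u))/(v-u)\le(\varphi(w)-\varphi(u))/(w-u)$, and multiplying by $v-u>0$ produces the stated bound.

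I expect the only genuinely delicate point to be the finiteness claim in item 1: monotonicity by itself only yields a limit in the extended reals, and it is precisely the hypothesis $u_0\in\mathrm{int}(I)$ that supplies a right-hand point $w$ whose chord slope dominates all the left chord slopes, forcing the supremum to be real. The case bookkeeping in the monotonicity argument is routine but must be arranged carefully so that the straddling configuration $u_1<u_0<u_2$ is not overlooked.
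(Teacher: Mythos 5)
Your proof is correct. The paper itself gives no argument for this proposition --- it simply cites Hiriart-Urruty and Lemar\'echal --- and what you have written is exactly the standard three-chord (slope monotonicity) argument found in that reference: the single convexity inequality at $b=\lambda a+(1-\lambda)c$ rearranged into the chain of chord slopes, the three-case check of monotonicity of $u\mapsto s(u)$ including the straddling case $u_1<u_0<u_2$, and the use of a point $w>u_0$ (available because $u_0\in\mathrm{int}(I)$) to bound the supremum and force $D^-\varphi(u_0)\in\mathbb{R}$. The only point you leave tacit is that $\mathrm{int}(I)$ also supplies at least one $u<u_0$, so the supremum is over a nonempty set and hence not $-\infty$; this is immediate and does not constitute a gap.
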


\begin{proof}
See \cite{HiriartLemarechal1993}.
\end{proof}

\begin{proposition}
\label{pr:nwt.imp}
If $h:[a,b) \to \mathbb{R}$ is convex, differentiable at $a$, $h'(a)<0$ and
\[\lim_{t\to b_{-}} h(t)=0,\]
then
\[ a-\frac{h(a)}{h'(a)}\leq b,\]
with equality if and only if $h$ is affine in $[a,b)$.
\end{proposition}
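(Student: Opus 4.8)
The plan is to read $a - h(a)/h'(a)$ geometrically: it is the point where the tangent line to the graph of $h$ at the left endpoint $a$ crosses the horizontal axis, and convexity forces this tangent line to lie below the graph. First I would set $N := a - h(a)/h'(a)$, which is well defined because $h'(a) < 0$, and introduce the affine function $\ell(t) := h(a) + h'(a)(t-a)$, so that $\ell(N) = 0$ and $\ell$ is strictly decreasing. By convexity of $h$ together with differentiability at $a$, the tangent supports the graph from below, i.e. $h(t) \geq \ell(t)$ for every $t \in [a,b)$; this is exactly the subgradient inequality, and it follows from the monotonicity of the difference quotients recorded in Proposition \ref{pr:conv.aux1}(1), here in its one-sided form at the endpoint $a$, where $h'(a)$ is the right derivative and therefore the infimum of the quotients $\bigl(h(t)-h(a)\bigr)/(t-a)$ over $t>a$.

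The inequality $N \leq b$ then comes from letting $t \to b_-$. Since $h(t) \geq \ell(t)$ on $[a,b)$ and $\ell$ is continuous, passing to the limit gives $0 = \lim_{t\to b_-} h(t) \geq \lim_{t\to b_-}\ell(t) = h(a) + h'(a)(b-a)$. Hence $h'(a)(b-a) \leq -h(a)$, and dividing by $h'(a) < 0$, which reverses the inequality, yields $b - a \geq -h(a)/h'(a)$, that is $b \geq N$. Nothing here depends on the sign of $h(a)$, so the estimate is uniform over all admissible $h$.

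For the equality characterization I would study the gap $g := h - \ell$. It is convex, nonnegative on $[a,b)$ by the supporting inequality, satisfies $g(a) = 0$, and, since $\ell(b) = 0$ exactly when $N = b$, has $\lim_{t\to b_-} g(t) = 0$ in the equality case. Applying the chord inequality of Proposition \ref{pr:conv.aux1}(2) with left endpoint $a$ and right endpoint $s$, then letting $s \to b_-$, gives $g(t) \leq 0$ for each $t \in [a,b)$; combined with $g \geq 0$ this forces $g \equiv 0$, so $h = \ell$ is affine. The converse is a direct computation: if $h$ is affine then $h(t) = h(a) + h'(a)(t-a)$, and $\lim_{t\to b_-} h(t) = 0$ gives $h(a) + h'(a)(b-a) = 0$, i.e. $N = b$.

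I expect the only delicate points to be the careful handling of the one-sided derivative at the endpoint $a$ when invoking the supporting inequality, and the limit passages at the open endpoint $b$; the core estimate itself is a one-line consequence of the tangent line lying below the graph.
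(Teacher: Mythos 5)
Your proof is correct and follows essentially the same route as the paper's: the supporting tangent line at $a$ lies below the graph, the limit $t\to b_-$ gives $h(a)+h'(a)(b-a)\leq 0$, and the equality case is settled by the chord inequality of Proposition \ref{pr:conv.aux1}(2) followed by the limit at $b$. Your only cosmetic difference is subtracting the tangent line first and working with the nonnegative gap $g=h-\ell$, whereas the paper sandwiches $h(s)$ directly between the tangent and chord values; the substance is identical.
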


\begin{proof}
  Since $h$ is convex, $h(a)+h'(a)(t-a)\leq h(t)$ for any $t\in[a,b)$.
  Taking the limit $t\to b_{-}$ we obtain
  \[ 
    h(a)+h'(a)(b-a)\leq 0.
  \]
  The desired inequality now follows multiplying this inequality by the strictly positive number $-1/h'(a)$. If the above inequality holds as an equality,  then
  \[
   h'(a)=\frac{-h(a)}{b-a}.
  \]
  Let $a\leq s< t<b$. Using again the convexity of $h$ we have  \[ h(a)+h'(a)(s-a)\leq h(s)\leq h(a)\frac{t-s}{t-a}+h(t)\frac{s-a}{t-a}.
  \]
  Taking again the limit $t\to b_{-}$ in the above equation and using
  the previous equation we conclude that $h(s)=h(a)(b-s)/(b-a)$, i.e.,
  $h$ is affine. If $h$ is affine then the the inequality
  of the proposition holds trivially as an equality.
\end{proof}

\section{The inexact Newton method with relative error} \label{sec:rerror}

Our goal is to prove the following version of
Kantorovich's theorem on inexact Newton's Method with relative error.

\begin{theorem}
  \label{th:ki.r}
  Let $\banacha$ and $\banachb$ be Banach spaces,  $R\in \mathbb{R}$, $C\subseteq
  \banacha$ and $F:{C}\to \banachb$ a continuous function,
  continuously differentiable on $int(C)$.  Take $x_0\in
  \mathrm{int}(C )$ with $F '(x_0)$ non-singular.  Suppose that
  \mbox{$ f:[0,R)\to\mathbb{R}$} is continuously differentiable,
  $B(x_0,R)\subseteq C$,
  \begin{equation}\label{MC.2}
  \|F'(x_0)^{-1}\left[F'(y)-F'(x)\right]\| \leq
    f'(\|y-x\|+\|x-x_0\|)-f'(\|x-x_0\|),
  \end{equation}
   for any $x,y\in B(x_0,R)$,  $\|x-x_0\|+\|y-x\|< R$,
  \begin{equation}    \label{eq:KH.2}
    \|F'(x_0)^{-1}F(x_0)\|\leq f(0)\,,
  \end{equation}
  \begin{itemize}
    \item[{\bf h1)}] $f(0)>0$, $f'(0)=-1$;
    \item[{\bf h2)}] $f'$ is strictly increasing and convex;
    \item[{\bf h3)}] $f(t)<0$ for some $t\in (0,R)$.
  \end{itemize}
  Let 
  \[
   \beta:=\sup_{t\in[0,R)} -f(t),\;\;  t_*:=\min f^{-1}(\{0\}),
   \;\; \bar \tau:=\sup\{t\in [0,R)\,:\, f(t)<0\}.
  \]
  Take $0\leq \rho<\beta/2$ and define
  \begin{equation} \label{eq:dktt}
  \kappa_\rho:=\sup_{\rho <t<R}\frac{-(f(t)+2\rho)}{|f'(\rho)|\,(t-\rho)},\quad
  \lambda_\rho:=\sup \{t\in [\rho,R): \kappa_\rho+f'(t)<0\},
  \quad 
  \tildetheta_\rho:=\frac{\kappa_\rho}{2-\kappa_\rho}.
  \end{equation}
  Then for any $\theta\in [0,\tildetheta_\rho]$ and $z_0\in B(x_0,\rho)$,
  the sequence generated by the inexact Newton method for solving
  $F(x)=0$ with starting point $z_0$ and residual relative error
  tolerance $\theta$: For $k=0,1,\ldots,$
  \[
    z_{k+1}=z_k+S_k,\quad   
    \Norm{F'(z_0)^{-1}\left[F(z_k)+F'(z_k)S_k\right]}\leq \theta
    \|F'(z_0)^{-1}F(z_k)\|,
  \]
  is well defined (for any particular choice of each $S_k$), 
\begin{equation}
  \label{eq:th.rt}
 \|F'(z_0)^{-1}F(z_k)\|\leq
  \left(\frac{1+\theta^2}{2}\right)^{k}[f(0)+2\rho], \qquad k= 0,1,
  \ldots \,,
\end{equation}
 the  sequence $\{z_k\}$ is contained in
  $B(z_0, \lambda_\rho )$ and converges to a point $x_*\in B[x_0, t_*]$,
  which is the unique zero of $F$ on $B(x_0,\bar \tau)$. Moreover, if
  \begin{itemize}
  \item[{\bf h4)}] $\lambda_\rho<R-\rho$,
  \end{itemize}
 then the sequence $\{z_k \}$ satisfies, for $k= 0,1, \ldots \,$, 
\[
\|x_*-z_{k+1}\|\leq \left[ \frac{1+\theta}{2}\frac{D^{-}f'(\lambda_\rho)}{|f'(\lambda_{_\rho})|}\|x_*-z_k\|
+\theta\,\frac{f'(\lambda_{_\rho}+\rho)+2|f'(\rho)|}{|f'(\lambda_{_\rho}+\rho)|}\right]\|x_*-z_k\|.
\]
  If, additionally, $0\leq \theta <\kappa_\rho/(4+\kappa_\rho)$ then  $\{z_k \}$ converges $Q$-linearly as  follows
  \[
  \norm{x_*-z_{k+1}}\leq \left[\frac{1+\theta}{2}+\frac{2\theta}{\kappa_\rho} \right]\norm{x_*-z_k}, \qquad k= 0,1,
  \ldots \,.
  \]
\end{theorem}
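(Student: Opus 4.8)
The plan is to pass to the rescaled operator $G:=F'(z_0)^{-1}F$, for which $G'(z_0)=I$ and the iteration becomes $z_{k+1}=z_k+S_k$ with $\|G(z_k)+G'(z_k)S_k\|\le\theta\|G(z_k)\|$, and then to control $G$ and its iterates by the scalar majorant $f$. First I would record the elementary consequences of \textbf{h1)}--\textbf{h3)} together with Proposition~\ref{pr:conv.aux1}: since $f'$ is strictly increasing with $f'(0)=-1$ and $f$ takes negative values, $f$ is strictly decreasing then increasing, $0<t_*\le\bar\tau<R$, $f'<0$ on $[0,t_*]$, and $f<0$ on $(t_*,\bar\tau)$. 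The hypothesis $\rho<\beta/2$ forces $f(t)+2\rho<0$ for some $t$, hence $\kappa_\rho>0$; using $f'(\rho)\in[-1,0)$ and the monotonicity of $f'$ I would verify $0<\kappa_\rho<1$ (here the \emph{strict} increase of $f'$ is used), that $\lambda_\rho$ is characterised by $f'(\lambda_\rho)=-\kappa_\rho$, so that $|f'(\lambda_\rho)|=\kappa_\rho$, and that $\tildetheta_\rho=\kappa_\rho/(2-\kappa_\rho)\in[0,1)$, making the admissible range $\theta\in[0,\tildetheta_\rho]$ nonempty and the residual ratio $(1+\theta^2)/2<1$.

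Second, I would establish the two standard majorant-to-operator inequalities announced for Section~\ref{sec:br}. Integrating \eqref{MC.2} along segments gives, for admissible $x,y$, the linearization estimate $\|F'(x_0)^{-1}[F(y)-F(x)-F'(x)(y-x)]\|\le f(\|y-x\|+\|x-x_0\|)-f(\|x-x_0\|)-f'(\|x-x_0\|)\|y-x\|$, and a Banach-lemma argument shows that whenever $f'(\|x-x_0\|)<0$ the derivative $F'(x)$ is nonsingular with $\|F'(x)^{-1}F'(x_0)\|\le 1/|f'(\|x-x_0\|)|$. Since $z_0\in B(x_0,\rho)$, I would recast these as statements about $G$ centred at $z_0$ by replacing each argument $\|\cdot-x_0\|$ by (distance from $z_0$)$+\rho$ and using $\|F'(z_0)^{-1}F'(x_0)\|\le 1/|f'(\rho)|$; this centre shift is exactly what produces the $2\rho$ in the numerator of $\kappa_\rho$ and the $+\rho$ in $f'(\lambda_\rho+\rho)$. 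In particular \eqref{eq:KH.2} yields $\|G(z_0)\|=\|F'(z_0)^{-1}F(z_0)\|\le f(0)+2\rho$, the constant appearing in \eqref{eq:th.rt}.

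The core of the argument (Sections~\ref{sec:inpso}--\ref{sec:cinmre}) is a one-step invariance lemma, proved by induction over a family of regions parametrised by the scalar majorant. Assume inductively that $z_k\in B(z_0,\lambda_\rho)$ and $\|G(z_k)\|\le b_k$. Then $\|z_k-x_0\|\le\lambda_\rho+\rho$ lies in the range where $f'<0$, so $F'(z_k)$ is nonsingular; hence an admissible $S_k$ exists (the exact step is one) and for \emph{every} admissible choice $S_k=G'(z_k)^{-1}(r_k-G(z_k))$ with $\|r_k\|\le\theta b_k$, whose norm is controlled through $1/|f'(\|z_k-x_0\|)|$. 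Summing these step bounds geometrically and using the defining inequality $\kappa_\rho+f'(t)<0$ for $t<\lambda_\rho$ keeps $z_{k+1}$ in $B(z_0,\lambda_\rho)$. Writing $G(z_{k+1})=r_k+[G(z_{k+1})-G(z_k)-G'(z_k)S_k]$ and bounding the bracket by the linearization inequality above, the calibration is arranged so that the second-order part is at most $\tfrac{(1-\theta)^2}{2}b_k$, whence $\|G(z_{k+1})\|\le\theta b_k+\tfrac{(1-\theta)^2}{2}b_k=\tfrac{1+\theta^2}{2}b_k$; with $b_0=f(0)+2\rho$ this is \eqref{eq:th.rt}. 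I expect this closure to be the main obstacle: one must calibrate the second-order bound against the $2\rho$-shifted majorant so that exactly the factor $(1-\theta)^2/2$ emerges and the geometric sum of scalar steps does not leave $B(z_0,\lambda_\rho)$; the constants $\kappa_\rho$, $\lambda_\rho$, $\tildetheta_\rho$ are tuned precisely to make this induction close.

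Finally I would assemble the global statements. Geometric decay of $b_k$ with ratio $(1+\theta^2)/2<1$ together with the step bound $\|S_k\|\le(1+\theta)\|G'(z_k)^{-1}\|\,\|G(z_k)\|$, with $\|G'(z_k)^{-1}\|$ uniformly bounded on the region, gives $\sum_k\|S_k\|<\infty$, so $\{z_k\}$ is Cauchy; its limit $x_*$ satisfies $G(x_*)=0$, i.e.\ $F(x_*)=0$, by passing to the limit in the residual. A scalar control of $\|z_k-x_0\|$ by Newton-type iterates of $f$, which by Proposition~\ref{pr:nwt.imp} do not exceed $t_*$, places $x_*\in B[x_0,t_*]$. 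Uniqueness on $B(x_0,\bar\tau)$ is the usual majorant argument: if $F(y)=0$ with $\|y-x_0\|<\bar\tau$, then $\|y-x_*\|\le\int_0^1\|F'(x_0)^{-1}[F'(x_*+s(y-x_*))-F'(x_0)]\|\,ds\,\|y-x_*\|$ with the integral factor $<1$ because $f<0$ on $(t_*,\bar\tau)$, forcing $y=x_*$. For the rate estimates under \textbf{h4)} I would start from $x_*-z_{k+1}=-G'(z_k)^{-1}[G(x_*)-G(z_k)-G'(z_k)(x_*-z_k)]-G'(z_k)^{-1}r_k$: the second-order term is bounded by $\tfrac{1+\theta}{2}\,(D^{-}f'(\lambda_\rho)/|f'(\lambda_\rho)|)\,\|x_*-z_k\|^2$, the factor $1+\theta$ reflecting the residual-induced amplification of the actual step over the exact Newton step and $\tfrac12$ the second-order majorant bound via Proposition~\ref{pr:conv.aux1}(1), while the residual term contributes $\theta\,(f'(\lambda_\rho+\rho)+2|f'(\rho)|)/|f'(\lambda_\rho+\rho)|\,\|x_*-z_k\|$; replacing the actual arguments by the worst cases $\lambda_\rho$, $\lambda_\rho+\rho$, $\rho$ through the monotonicity of $f'$ gives the first displayed inequality. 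Substituting $|f'(\lambda_\rho)|=\kappa_\rho$, $|f'(\rho)|\le1$, the region bound on $\|x_*-z_k\|$, and imposing $\theta<\kappa_\rho/(4+\kappa_\rho)$ collapses the bracket to $\tfrac{1+\theta}{2}+2\theta/\kappa_\rho$; since that threshold is exactly the one rendering this bracket $<1$, the claimed $Q$-linear rate follows.
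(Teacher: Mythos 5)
Your overall architecture matches the paper's: re-centering the majorant at $z_0$ (your ``shift by $\rho$'' is exactly the paper's auxiliary majorant $g(t)=\frac{-1}{f'(\rho)}[f(t+\rho)+2\rho]$), a one-step lemma giving the residual decay factor $\theta+\frac{(1-\theta)^2}{2}=\frac{1+\theta^2}{2}$, and the same decompositions for the rate estimates. But the step you yourself flag as ``the main obstacle'' is precisely the step you have not supplied, and the induction hypothesis you propose is too weak to close it. You carry only ``$z_k\in B(z_0,\lambda_\rho)$ and $\|G(z_k)\|\le b_k$'' and propose to keep $z_{k+1}$ in $B(z_0,\lambda_\rho)$ by summing the step bounds geometrically with the uniform estimate $\|F'(z_k)^{-1}F'(z_0)\|\le 1/\kappa_\rho$. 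This fails: for the quadratic majorant $f(t)=\frac{L}{2}t^2-t+b$ with $\rho=0$ one has $\kappa=1-\sqrt{2bL}$ and $\lambda=\sqrt{2b/L}$, so the geometric sum is of order $\frac{2b}{\kappa(1-\theta)}$, which exceeds $\lambda$ by the factor $\sqrt{2bL}/[(1-\sqrt{2bL})(1-\theta)]$ — unbounded as $bL\to 1/2$. Likewise, to bound the second-order term by $\frac{(1-\theta)^2}{2}b_k$ you must relate $b_k$ to the remaining distance $\lambda_\rho-\|z_k-z_0\|$, and nothing in your hypothesis provides that link.

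What closes the induction in the paper is a strictly stronger invariant: a scalar pair $(t_k,\varepsilon_k)$ in the region $\Omega=\{0\le t<\lambda,\;0\le\varepsilon\le\kappa t,\;f(t)+\varepsilon>0\}$ of \eqref{eq:dt}, evolving under the map $n_\theta$ of \eqref{eq:nftheta}, with $\|z_k-z_0\|\le t_k$ and $\|F'(z_0)^{-1}F(z_k)\|\le f(t_k)+\varepsilon_k$ (the sets $K(t,\varepsilon)$). The error budget $\varepsilon\le\kappa t$ is what permits the factorization $-\frac{f(t)+\varepsilon}{f'(t)}\le-\frac{f(t)+\kappa t}{f'(t)+\kappa}(1-\kappa)$, after which Proposition~\ref{pr:nwt.imp} applied to $h(s)=f(s)+\kappa s$ gives $-\frac{f(t)+\kappa t}{f'(t)+\kappa}<\lambda-t$; combined with $(1+\theta)(1-\kappa)\le 1-\theta$ for $\theta\le\tildetheta$, this simultaneously keeps $t_+<\lambda$ (hence the iterates in the ball, via $\|z_{k+1}-z_k\|\le t_{k+1}-t_k$) and produces the factor $\frac{(1-\theta)^2}{2}$ in Lemma~\ref{lm:id.t}. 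Without some equivalent of this two-parameter bookkeeping your argument does not go through. A smaller point: your uniqueness sketch bounds the integrand by a quantity involving $f$, but the relevant bound is $f'(\cdot)+1$, and $f'>0$ beyond $\bar t$, so the integral factor need not be less than one; the paper instead uses the lower bound $-f(\|y-x_0\|)\le\|F'(x_0)^{-1}F(y)\|$ of Corollary~\ref{cr:new.01} to exclude zeros in the annulus $t_*<\|y-x_0\|<\bar\tau$ and defers the remainder to the exact-Newton Kantorovich theorem.
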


\begin{remark}
In Theorem~\ref{th:ki.r} if $\theta=0$ we obtain the  exact Newton method and
its convergence properties. Now, taking $\theta=\theta_k$  in each iteration
and letting $\theta_k$ goes to zero as $k$ goes to infinity, the  penultimate
inequality of the Theorem~\ref{th:ki.r} implies that the generated sequence
converges to the solution with superlinear rate.
\end{remark}

>From now on, we assume that the hypotheses of Theorem \ref{th:ki.r} hold.  The
scalar function $f$ in the above theorem is called a {\it majorant function}
for $F$ at point $x_0$. Before proceeding, we will analyze some basic
properties of the majorant function.  Condition {\bf h2} implies in strict
convexity of $f$. Note that  $t_*$ is the smallest root of $f(t)=0$ and, since
$f$ is convex, if this equation has two roots, then the second one is
$\bar\tau$.

 Define
\begin{equation}
  \label{eq:def.bart}
  \bar t:=\sup \{t\in [0,R): f'(t)<0\}\;.
\end{equation}

\begin{proposition}
  \label{pr:maj.f}
  The following statements on the majorant function hold
  \begin{itemize}
  \item[i)] $f'(t)<0$ for any $t\in [0,\bar t)$, (and
    $f'(t)\geq 0$ for any $t\in [0,R)\setminus [0,\bar t)$);
  \item[ii)] $0< t_* <\bar t\leq \bar \tau\leq R$;
  \item[iii)] $ \beta=-\lim_{t\to \bar t_{-}} f(t),\qquad 0< \beta <\bar t$.
\end{itemize}
\end{proposition}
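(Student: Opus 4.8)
The plan is to exploit the ``U-shape'' of $f$ that is forced by the hypotheses: since $f'$ is strictly increasing (by \textbf{h2}) and $f'(0)=-1<0$ (by \textbf{h1}), the derivative changes sign exactly once, so $f$ is strictly decreasing on $[0,\bar t)$ and nondecreasing on $[\bar t,R)$. All three items will follow from this monotonicity picture combined with the convexity of $f$. The single genuinely quantitative ingredient I will need is the supporting-line estimate coming from convexity.

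For item (i), I would first note that $\bar t>0$, since $f'(0)<0$ and $f'$ is continuous. For $t\in[0,\bar t)$, the definition of $\bar t$ as a supremum furnishes some $s\in(t,\bar t)$ with $f'(s)<0$, and strict monotonicity of $f'$ then gives $f'(t)<f'(s)<0$. For $t\in[\bar t,R)$: if $t>\bar t$, then $t$ does not belong to $\{s:f'(s)<0\}$ by definition of the supremum, so $f'(t)\geq 0$; and when $\bar t<R$, continuity of $f'$ excludes $f'(\bar t)<0$ (otherwise $f'$ would stay negative just to the right of $\bar t$, contradicting that $\bar t$ is the supremum), giving $f'(\bar t)\geq 0$ as well.

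For item (ii), the bound $t_*>0$ holds because $f(0)>0$ while $f(t)<0$ for some $t$ (by \textbf{h3}), so the intermediate value theorem produces a root, and $f(0)\neq 0$ forces the smallest root $t_*$ to be positive; minimality and continuity also yield $f>0$ on $[0,t_*)$. To obtain $t_*<\bar t$ I argue by contradiction: if $f'(t_*)\geq 0$, then $f'$ being increasing gives $f'\geq 0$ on $[t_*,R)$, so $f$ is nondecreasing there and $f\geq f(t_*)=0$ on $[t_*,R)$; together with $f>0$ on $[0,t_*)$ this contradicts \textbf{h3}. Hence $f'(t_*)<0$, i.e. $t_*<\bar t$. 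Since $f$ is then strictly decreasing on $(t_*,\bar t)$ with $f(t_*)=0$, we get $f<0$ throughout $(t_*,\bar t)$, which forces $\bar\tau\geq\bar t$. Finally $\bar\tau\leq R$ is immediate from its definition.

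For item (iii), since $f$ decreases on $[0,\bar t)$ and is nondecreasing on $[\bar t,R)$, its infimum over $[0,R)$ is attained in the limit $t\to\bar t_-$ (the monotone, bounded-below limit exists and is finite), whence $\beta=\sup_t(-f(t))=-\lim_{t\to\bar t_-}f(t)$, and $\beta>0$ follows at once from \textbf{h3}. The upper bound $\beta<\bar t$ is where convexity enters: the supporting line at $0$ gives $f(t)\geq f(0)+f'(0)t=f(0)-t$ for all $t$, so passing to the limit yields $\lim_{t\to\bar t_-}f(t)\geq f(0)-\bar t$, and therefore $\beta\leq\bar t-f(0)<\bar t$ because $f(0)>0$. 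I expect the main thing to handle with care is the boundary case $\bar t=R$, where limits must replace function values throughout; away from that case every estimate is an elementary consequence of the monotonicity structure and the single supporting-line bound.
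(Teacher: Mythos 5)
Your proof is correct and follows essentially the same route as the paper's: monotonicity of $f'$ from \textbf{h1}--\textbf{h2} for item (i), the intermediate value theorem plus a convexity/monotonicity argument showing $f'(t_*)<0$ for item (ii), and the supporting line $f(t)\geq f(0)-t$ at $0$ for item (iii). The only cosmetic difference is that you establish $f'(t_*)<0$ by contradiction via monotonicity of $f'$, whereas the paper reads it off directly from the supporting-line inequality at $t_*$; both are immediate consequences of convexity and the conclusion is identical.
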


\begin{proof}
  Item {\it i} follows from the second part of  {\bf h1},  {\bf h2} and
  the definition \eqref{eq:def.bart}.

  Using the first inequality in {\bf h1}, {\bf h3} and the continuity
  of $f$ we conclude that $t_*$ is well defined and
  \[
  0< t_*<R.
  \]
  Condition {\bf h2} implies in strict convexity of $f$, hence
  condition {\bf h3} and the definition of $t_*$ imply that there exists
  $t\in (t_*, R)$ such that
  \[
  0>f(t)>f(t_*)+f'(t_*)(t-t_*)=f'(t_*)(t-t_*),
  \]
  which implies that $0>f'(t_*)$. Therefore, using item i and the
  definition of $\bar t$ we have
  \[
  t_* <\bar t\leq R.
  \]
  Since $t_*$ is the smallest root of $f(t)=0$ and $f$ is strictly
  decreasing in $[0, \bar t)$ we conclude that $f<0$ in $[t_*, \bar
  t)$. So, the definition of $\bar \tau$ implies that
  \[
  \bar t\leq \bar \tau\leq R,
  \]
  and the proof of item ii is concluded.

  Using {\bf h3} and the definition of $\beta$ we obtain that $ 0<\beta.  $ Since $f$ is convex, combining this with {\bf h1} we have
  \[
  f(t)\geq f(0)-t>-t, \qquad 0\leq t<R,
  \]
  with strict inequality for $t\neq 0$. We know that $f$ is strictly
  decreasing and $f<0$ in $[t_*, \bar t)$. Hence, letting $t$ goes to
  $ \bar t_{-}$ in last inequality and using the definition of $\beta$ the
  item iii follows.
\end{proof}
We will first prove Theorem~\ref{th:ki.r} for the case $\rho=0$ and $z_0=x_0$. 
In order to  simplify the notation in the case $\rho=0$, we will use
$\kappa$, $\lambda$ and $\theta$ instead of $\kappa_0$, $\lambda_0$
and $\theta_0$ respectively:
\begin{equation} \label{eq:dktt.0}
  \kappa:=\sup_{0<t<R}\frac{-f(t)}{t},\quad
  \lambda:=\sup \{t\in [0,R): \kappa+f'(t)<0\},
  \quad 
  \tildetheta:=\frac{\kappa}{2-\kappa}.
\end{equation}
\begin{proposition}
  \label{pr:new}
  For $\kappa,\lambda,\theta$ as in \eqref{eq:dktt.0} it holds that 
\begin{equation}
  \label{eq:bd.pr.a}
   0<\kappa<1,\qquad  0<\tildetheta<1,\qquad  t_*<\lambda\leq\bar t \leq\bar \tau,
\end{equation}
and 
\begin{equation}
  \label{eq:bd.pr.b}
  \begin{array}{c}
    \displaystyle  
    f'(t)+\kappa<0, \qquad \forall\; t\in[0,\lambda),\\[.7em]
    \displaystyle    \inf_{0\leq t<R}  f(t)+\kappa t=\lim_{t\to\lambda_{-}}  f(t)+\kappa t=0,
  \end{array}
\end{equation}
\end{proposition}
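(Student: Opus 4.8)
The plan is to route everything through the single auxiliary function $g(t):=f(t)+\kappa t$ on $[0,R)$, which is convex (by h2) and continuously differentiable with $g'(t)=f'(t)+\kappa$; all three displayed conclusions can be read off from the shape of $g$. First I would record the two facts that come directly from $\kappa$ being a supremum. Since $\kappa\geq -f(t)/t$ for every $t\in(0,R)$, we get $g(t)=f(t)+\kappa t\geq 0$ there, while $g(0)=f(0)>0$ by h1; hence $g\geq 0$ on $[0,R)$. Choosing $t_n$ with $-f(t_n)/t_n\to\kappa$ gives $g(t_n)\to 0$, so $\inf_{[0,R)}g=0$. Positivity $\kappa>0$ is immediate from h3, since some $f(t)<0$ makes $-f(t)/t>0$.

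To obtain $\kappa<1$ I would argue by contradiction. If $\kappa\geq 1$ then $g'(0)=f'(0)+\kappa=\kappa-1\geq 0$, and because h2 makes $f'$ (hence $g'$) strictly increasing, $g$ would be strictly increasing on $[0,R)$ and thus $g(t)>g(0)=f(0)>0$ everywhere, contradicting $\inf_{[0,R)}g=0$. This yields $0<\kappa<1$, after which $0<\tildetheta<1$ is pure algebra from $\tildetheta=\kappa/(2-\kappa)$ (note $\tildetheta<1\iff\kappa<2-\kappa\iff\kappa<1$). Next I would exploit $g'(0)=\kappa-1<0$ together with the strict monotonicity and continuity of $g'$: the set $\{t:g'(t)<0\}$ is exactly $[0,\lambda)$, which is precisely the first line of \eqref{eq:bd.pr.b}, and $g$ is strictly decreasing on $[0,\lambda)$ and, when $\lambda<R$, strictly increasing on $(\lambda,R)$. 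Consequently the infimum of $g$ over $[0,R)$ is attained as the one-sided limit $\lim_{t\to\lambda_{-}}g(t)$, which equals $0$ by the first paragraph; this is the second line of \eqref{eq:bd.pr.b}.

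For the chain $t_*<\lambda\leq\bar t\leq\bar\tau$, I would proceed from the outside in. On $[0,\lambda)$ we have $f'(t)<-\kappa<0$, so $[0,\lambda)\subseteq\{t:f'(t)<0\}$ and therefore $\lambda\leq\bar t$ by the definition \eqref{eq:def.bart}; the inequality $\bar t\leq\bar\tau$ is Proposition \ref{pr:maj.f}(ii). The remaining strict inequality $t_*<\lambda$ is the point that needs care. When $\lambda<R$ the identity $g(\lambda)=0$ gives $f(\lambda)=-\kappa\lambda<0$, and since $f$ is strictly negative only on $(t_*,\bar\tau)$ (Proposition \ref{pr:maj.f}), this forces $\lambda\in(t_*,\bar\tau)$, in particular $t_*<\lambda$; when $\lambda=R$ it holds trivially because $t_*<R$.

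The hard part will be the bookkeeping between the two cases $\lambda<R$ and $\lambda=R$: I must make sure the characterization $\inf g=\lim_{t\to\lambda_-}g=0$ and the placement of $\lambda$ relative to $t_*$ are valid in both situations, and that the deduction $\kappa<1$ is arranged \emph{before} it is used to locate $\lambda$, so that no step covertly assumes what it is meant to prove.
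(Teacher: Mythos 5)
Your proof is correct and follows essentially the same route as the paper: both arguments study the auxiliary function $g(t)=f(t)+\kappa t$ via the convexity and strict monotonicity of $f'$, read off the first line of \eqref{eq:bd.pr.b} from $g'(0)=\kappa-1<0$, and obtain the infimum/limit identities from the definition of $\kappa$ together with the monotonicity of $g$ on either side of $\lambda$. The only real difference is localized in the proof of $\kappa<1$: you argue by contradiction from $\inf g=0$, whereas the paper derives it directly from the convexity bound $-f(t)/t\le 1-f(0)/t<1-f(0)/R<1$ (slightly more quantitative, but equivalent in effect); your case analysis $\lambda<R$ versus $\lambda=R$ for $t_*<\lambda$ is also sound and in fact spells out a step the paper leaves terse.
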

\begin{proof}
Since $f$ is convex, combining this with {\bf h1} we have
\[
  f(t)\geq f(0)-t>-t, \qquad 0\leq t<R,
\]
with strict inequality for $t\neq 0$. For $t\neq 0$, last inequality is equivalent to
\[
 \frac{-f(t)}{t}\leq 1-\frac{f(0)}{t}<1-\frac{f(0)}{R}<1,\qquad 0<t< R,
\]
and, using also {\bf h3}, we conclude that 
\[
   0<\kappa<1,\qquad  0<\tildetheta<1,
\]
where the bounds on $\tildetheta$ follows from its definition and the
bound on $\kappa$.  Moreover, as $f'$ is continuous, strictly increasing and $f'(0)=-1$, we obtain
\[
  \begin{array}{c}
    \displaystyle  
    0<\lambda,\qquad 
    f'(t)+\kappa<0, \qquad  \forall\; t\in [0, \lambda),\\[.7em]
    \displaystyle    \inf_{0\leq t<R}  f(t)+\kappa t=\lim_{t\to\lambda_{-}}  f(t)+\kappa t=0,
  \end{array}
\]
where the last equalities  follows from the
definition of~$\kappa$. 

Note that $f'(t)=-\kappa<0$ for all $t\in [0, \lambda)$. Since $f'$ is strictly negative in
$[0,\lambda)$, we conclude that $ t_*<\lambda\leq\bar t\leq\bar \tau$ and the proof is concluded.
\end{proof}
\section{Basic results} \label{sec:br}
In this section we will obtain bounds on $\|F'^{-1}\|$ and on the
linearization error on $F$ using the majorant function $f$. This
bounds will be used in the next section for analyzing the inexact Newton
iterations. It is worth mentioning that in this section the 
inequality on {\bf h1} and \eqref{eq:KH.2} will be used only
for proving its last result and {\bf h3} will not be used.

A Newton iteration at $x$ requires non-singularity of $F'(x)$, which will be
verified using the majorant function $f$.

\begin{proposition}\label{pr:banach}
  If \,\,$\| x-x_0\|\leq t<\bar{t}$ then $F'(x) $ is non-singular and
  \[
   \|F'(x)^{-1}F'(x_0)\|\leq  \frac{1}{-f'(t)}.
  \]
\end{proposition}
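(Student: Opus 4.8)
The plan is to reduce the statement to the Banach lemma (Neumann series) on invertibility of operators close to the identity, using the majorant condition \eqref{MC.2} to control the distance of $F'(x_0)^{-1}F'(x)$ from the identity.

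First I would estimate the perturbation
\[
  \norm{F'(x_0)^{-1}F'(x)-I} = \norm{F'(x_0)^{-1}\left[F'(x)-F'(x_0)\right]}.
\]
Applying \eqref{MC.2} with the pair $(x_0,x)$ (that is, taking the ``$x$'' of \eqref{MC.2} to be $x_0$ and the ``$y$'' to be $x$), and using $f'(0)=-1$ from {\bf h1}, gives
\[
  \norm{F'(x_0)^{-1}\left[F'(x)-F'(x_0)\right]} \leq f'(\norm{x-x_0})-f'(0) = f'(\norm{x-x_0})+1.
\]
Since $\norm{x-x_0}\leq t$ and $f'$ is increasing by {\bf h2}, monotonicity yields $f'(\norm{x-x_0})\leq f'(t)$, so the perturbation is bounded above by $f'(t)+1$.

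Next I would check that this bound lies in $[0,1)$. On one hand $f'(t)\geq f'(0)=-1$, because $f'$ is increasing, hence $f'(t)+1\geq 0$. On the other hand, the hypothesis $t<\bar t$ together with Proposition~\ref{pr:maj.f}(i) gives $f'(t)<0$, so $f'(t)+1<1$. Thus $\norm{F'(x_0)^{-1}F'(x)-I}<1$, and the Banach lemma applies to $F'(x_0)^{-1}F'(x)$: it is invertible with
\[
  \norm{\left(F'(x_0)^{-1}F'(x)\right)^{-1}} \leq \frac{1}{1-\norm{F'(x_0)^{-1}F'(x)-I}} \leq \frac{1}{1-(f'(t)+1)} = \frac{1}{-f'(t)}.
\]

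Finally, since $F'(x_0)$ is non-singular by hypothesis and $F'(x_0)^{-1}F'(x)$ is invertible, $F'(x)$ is non-singular, and from the identity $\left(F'(x_0)^{-1}F'(x)\right)^{-1}=F'(x)^{-1}F'(x_0)$ the displayed bound follows. There is no real analytic obstacle here beyond the standard invertibility estimate; the only point requiring care is the double use of monotonicity of $f'$ (once to pass from $\norm{x-x_0}$ to $t$, and once to obtain the sign $f'(t)<0$), which is precisely where {\bf h2} and the definition of $\bar t$ via Proposition~\ref{pr:maj.f} enter.
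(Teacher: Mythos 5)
Your argument is correct and coincides with the paper's own proof: both bound $\|F'(x_0)^{-1}F'(x)-I\|$ by $f'(t)+1<1$ via \eqref{MC.2}, {\bf h1}, {\bf h2} and the definition of $\bar t$, then invoke Banach's Lemma to obtain invertibility and the estimate $1/(1-(f'(t)+1))=1/(-f'(t))$. Your write-up is in fact slightly more careful than the paper's, which compresses the monotonicity step $f'(\|x-x_0\|)\leq f'(t)$ into an equality sign.
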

\begin{proof}
  The definition \eqref{eq:def.bart} shows that $f'(t)<0$. Direct
  manipulation, \eqref{MC.2}, {\bf h1} and {\bf h2} give us
\begin{align*}
        \|F'(x_0)^{-1}F'(x)-I\|= \|F'(x_0)^{-1}[F'(x)-F'(x_0)]\|&\leq
   f'(\|x-x_0\|)-f'(0)\\ &=  f'(t)+1 <1.
\end{align*}
Using   Banach's Lemma and the last inequality above we conclude that 
$F'(x_0)^{-1}F'(x)$ is non-singular and
\[ 
\|F'(x)^{-1}F'(x_0)\|=\|(F'(x_0)^{-1}F'(x))^{-1}\|
\leq  \frac{1}{1-(f'(t)+1)},
\]
which is the desired inequality.
\end{proof}

The linearization errors on $F$ and $f$ are, respectively
\begin{align}
  \label{eq:def.er}
  E_F(y,x):=& F(y)-\left[ F(x)+F'(x)(y-x)\right],
             &x&\in
  B(x_0, R),&y&\in C\\
\label{eq:def.ers}
    e_f(v,t):=&f(v)-[f(t) +f'(t)(v-t)], 
             &t&,v\in [0, R). &&
\end{align}
The linearization error of the majorant function bounded the
linearization error of $F$.

\begin{lemma} 
 \label{pr:taylor}
  If $x,y\in \banacha$ and $\norm{x-x_0}+\norm{y-x} < R$ then
  \[
  \|F'(x_0)^{-1}E_F(y,x)\|\leq
  e_f(\norm{x-x_0}+\norm{y-x},\norm{x-x_0})\,, 
  \]
\end{lemma}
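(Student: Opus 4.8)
The plan is to represent the linearization error as an integral of the variation of $F'$ along the segment joining $x$ and $y$, and then to bound this variation pointwise by means of the majorant condition \eqref{MC.2}. Since $F$ is continuously differentiable on $\mathrm{int}(C)$ and the segment $\{x+\tau(y-x):\tau\in[0,1]\}$ lies in $B(x_0,R)\subseteq C$ whenever $\norm{x-x_0}+\norm{y-x}<R$, the fundamental theorem of calculus in Banach spaces gives
\[
E_F(y,x)=F(y)-F(x)-F'(x)(y-x)=\int_0^1\left[F'(x+\tau(y-x))-F'(x)\right](y-x)\,d\tau.
\]
Applying the fixed bounded linear operator $F'(x_0)^{-1}$, moving it under the integral sign, and using the standard estimate $\Norm{\int_0^1 g(\tau)\,d\tau}\leq\int_0^1\norm{g(\tau)}\,d\tau$ (legitimate because the integrand is continuous in $\tau$), I would obtain
\[
\norm{F'(x_0)^{-1}E_F(y,x)}\leq\int_0^1\norm{F'(x_0)^{-1}\left[F'(x+\tau(y-x))-F'(x)\right]}\,\norm{y-x}\,d\tau.
\]

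Next I would apply the majorant inequality \eqref{MC.2} to the pair of points $x$ and $x+\tau(y-x)$. Its hypothesis requires $\norm{x-x_0}+\tau\norm{y-x}<R$, which holds for every $\tau\in[0,1]$ since $\tau\leq 1$ and $\norm{x-x_0}+\norm{y-x}<R$. This yields the pointwise bound
\[
\norm{F'(x_0)^{-1}\left[F'(x+\tau(y-x))-F'(x)\right]}\leq f'\!\left(\tau\norm{y-x}+\norm{x-x_0}\right)-f'\!\left(\norm{x-x_0}\right).
\]

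Finally, writing $t:=\norm{x-x_0}$ and $a:=\norm{y-x}$, the resulting scalar integral is $\int_0^1\left[f'(\tau a+t)-f'(t)\right]a\,d\tau$, and the substitution $s=\tau a+t$ converts it into
\[
\int_t^{t+a}\left[f'(s)-f'(t)\right]\,ds=f(t+a)-f(t)-f'(t)\,a=e_f(t+a,t),
\]
which is exactly $e_f(\norm{x-x_0}+\norm{y-x},\norm{x-x_0})$, completing the argument.

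As for the main obstacle, the computation is essentially routine once the integral representation is established; the only points requiring care are the justification of the Banach-space fundamental theorem of calculus and the passage of $F'(x_0)^{-1}$ under the integral. Both are standard given that $F'$ is continuous on the segment joining $x$ and $y$ and that $F'(x_0)^{-1}$ is a fixed bounded linear map, so I do not anticipate a genuine difficulty here.
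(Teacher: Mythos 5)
Your proposal is correct and follows essentially the same route as the paper: the integral representation $E_F(y,x)=\int_0^1[F'(x+\tau(y-x))-F'(x)](y-x)\,d\tau$, the pointwise application of \eqref{MC.2} along the segment, and the final integration yielding $e_f$. The only difference is that you spell out the verification of the hypothesis of \eqref{MC.2} and the change of variables explicitly, which the paper leaves implicit.
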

\begin{proof}
  Since  
  \[
   x+u(y-x)\in B(x_0, R), \quad \quad 0\leq u\leq 1,
  \]
  and $F$ is continuously differentiable in $B(x_0, R)$, direct
  use of \eqref{eq:def.er} gives
  \[ 
  E_F(y,x)=\int_0^1 [F'(x+u(y-x))-F'(x)](y-x)\; du.
  \]
  Combining the above equality with \eqref{MC.2}  we have
  \begin{align*}
    &\|F'(x_0)^{-1}E_F(y,x)\| \\
    &\leq \int_0 ^1 \left \|
    F'(x_0)^{-1}[F'(x+u(y-x))-F'(x)]\right\|\left\|y-x\right\| \; du\\ 
    &\leq \int_0 ^1 \left[f'\left(\|x-x_0\|+u\left\|y-x\right\|\right)-f'\left(\|x-x_0\|\right)
    \right] \|y-x\|\;du\,
  \end{align*}
  which after performing the integration and using the definition in
  \eqref{eq:def.ers} yields the desired inequality.
\end{proof}
Convexity of $f$ and $f'$ guarantee that $e_f(t+s,t)$ is increasing in
$s$ \emph{and} $t$.
\begin{lemma} \label{l:errormon}
If  $0\leq b\leq t$, $0\leq a\leq s$ and $t+s<R$ then
\begin{align*}
 e_f(a+b,b)&\leq e_f(t+s,t),\\
 e_f(a+b,b)&\leq   \frac 1 2 \frac{f'(t+s)-f'(t)}{s}\;a^2, \qquad s\neq 0.
\end{align*}
\end{lemma}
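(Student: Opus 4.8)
The plan is to write both inequalities in terms of the integral representation of the linearization error and then reduce everything to two monotonicity properties of a single integrand. Starting from the definition \eqref{eq:def.ers} and using that $f'$ is continuous, the fundamental theorem of calculus gives, for $t,t+s\in[0,R)$,
\[
e_f(t+s,t)=\int_0^s\left[f'(t+u)-f'(t)\right]du.
\]
Since $0\le b\le t$, $0\le a\le s$ and $t+s<R$, every argument appearing below lies in $[0,R)$ (for instance $b+a\le t+s<R$), so all these integrals are well defined. The whole proof rests on showing that the map $(t,s)\mapsto e_f(t+s,t)$ is nondecreasing in the increment $s$ \emph{and} in the base point $t$.

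Monotonicity in $s$ is immediate: by {\bf h2} the function $f'$ is increasing, so the integrand $f'(t+u)-f'(t)$ is nonnegative, and enlarging the upper limit of integration only increases the integral. Monotonicity in the base point is the crux and is exactly where the convexity of $f'$ in {\bf h2} is used. I would establish the pointwise inequality $f'(b+u)-f'(b)\le f'(t+u)-f'(t)$ for $b\le t$, $u\ge 0$ by writing both $t$ and $b+u$ as complementary convex combinations of the endpoints $b$ and $t+u$, namely with weights $\lambda:=u/(t+u-b)$ and $1-\lambda=(t-b)/(t+u-b)$ on $b$; applying convexity of $f'$ to each and adding the two resulting inequalities yields $f'(b+u)+f'(t)\le f'(b)+f'(t+u)$, which rearranges to the claim. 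Integrating this pointwise bound over $u$ gives monotonicity of $e_f(\,\cdot\,+t,t)$ in $t$. Chaining the two monotonicities,
\[
e_f(a+b,b)\le e_f(s+b,b)\le e_f(s+t,t)=e_f(t+s,t),
\]
proves the first inequality (the first step increases the increment from $a$ to $s$, the second increases the base from $b$ to $t$).

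For the second inequality I would first invoke the base-point monotonicity just proved, with increment $a$ fixed and base raised from $b$ to $t$, to get $e_f(a+b,b)\le e_f(a+t,t)$. It then remains to bound $e_f(a+t,t)=\int_0^a\left[f'(t+u)-f'(t)\right]du$. The function $\psi(u):=f'(t+u)-f'(t)$ is convex on $[0,R-t)$ and satisfies $\psi(0)=0$, so Proposition~\ref{pr:conv.aux1}(2) applied on the interval with endpoints $0$ and $s$ (using $s\neq 0$) gives $\psi(\xi)\le(\xi/s)\,\psi(s)$ for all $0\le\xi\le s$, in particular for $\xi\in[0,a]$ since $a\le s$. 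Integrating,
\[
e_f(a+t,t)=\int_0^a\psi(\xi)\,d\xi\le\frac{\psi(s)}{s}\int_0^a\xi\,d\xi=\frac12\,\frac{f'(t+s)-f'(t)}{s}\,a^2,
\]
which is the desired estimate.

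The only genuinely delicate point is the base-point monotonicity, i.e.\ that $f'(\,\cdot\,+u)-f'(\,\cdot\,)$ is nondecreasing; this is precisely the increasing-second-difference property of a convex function, and it is where the convexity of $f'$ (rather than mere monotonicity) is indispensable. Everything else reduces to the nonnegativity of the integrand and the elementary secant estimate of Proposition~\ref{pr:conv.aux1}.
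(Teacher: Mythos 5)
Your proof is correct and follows essentially the same route as the paper's: the integral representation of $e_f$, the increasing-differences property of the convex function $f'$ for base-point monotonicity, nonnegativity of the integrand for increment monotonicity, and the secant estimate of Proposition~\ref{pr:conv.aux1}(2) for the quadratic bound. The only differences are cosmetic — you chain the two monotonicities in the opposite order and spell out the convex-combination argument that the paper merely asserts.
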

\begin{proof}
  First note that
\[ e_f( a+b,b)= \int_0 ^{a} 
    \left[f'\left(b+ r\right)-f'\left(b\right)
    \right]
   d r\,.
 \]
 Since $f'$ is convex, for any $\tau_0>0$, the function $\tau \mapsto
 f'(\tau+\tau_0)-f'(\tau)$ is non-decreasing. So,
\begin{equation} \label{eq:ber}
 e_f( a+b,b)\leq \int_0 ^{a} 
    \left[f'\left(t+ r\right)-f'\left(t\right)
    \right]
   d r\,
\leq \int_0 ^{s} 
    \left[f'\left(t+ r\right)-f'\left(t\right)
    \right]
   d r\,.
 \end{equation}
 where the second inequality follows from the convexity of $f$, which
 implies positivity of the integrand.  To end the proof of first inequality, note that the
 last term on the above inequality is $e_f(t+s,t)$.
 
 For proving second inequality,  apply Proposition~\ref{pr:conv.aux1} item 2  with $u=t$, $v=t+r$,  $w=t+s$ and $\varphi=f'$ in first inequality in \eqref{eq:ber} to conclude that
 \[
 e_f( a+b,b)\leq \int_0 ^{a} [f'(t+s)-f'(t)] \,\frac{r}{s}\, dr,
 \]
which  performing the integral gives the desired inequality.
\end{proof}
Now we are ready to bound the  linearization error $E_F$ 
using  the linearization error on 
the majorant function.
\begin{corollary} \label{pr:taylor2}
If $x, y\in  \banacha$,  $\norm{x-x_0}\leq t$,  $\norm{y-x}\leq s$ and $s+t<R$ then
\begin{align*}
\|F'(x_0)^{-1}E_F(y,x)\| &\leq e_f(t+s,t),\\
\|F'(x_0)^{-1}E_F(y,x)\|&\leq \frac{1}{2} \frac{f'(s+t)-f'(t)}{s}\norm{y-x}^{2}, \qquad  s\neq 0. 
\end{align*}
\end{corollary}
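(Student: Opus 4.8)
The plan is to obtain this corollary as a direct combination of Lemma~\ref{pr:taylor} and Lemma~\ref{l:errormon}, with no new estimates required; all the analytic content already resides in those two results. First I would observe that the hypotheses $\norm{x-x_0}\leq t$, $\norm{y-x}\leq s$ and $s+t<R$ immediately yield $\norm{x-x_0}+\norm{y-x}\leq t+s<R$, so the hypothesis of Lemma~\ref{pr:taylor} is met. Applying that lemma gives
\[
\|F'(x_0)^{-1}E_F(y,x)\|\leq e_f(\norm{x-x_0}+\norm{y-x},\norm{x-x_0}),
\]
which reduces both assertions to scalar estimates on the majorant linearization error $e_f$ evaluated at the actual norms.

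Next I would transfer these scalar bounds using the two-sided monotonicity established in Lemma~\ref{l:errormon}. Setting $a:=\norm{y-x}$ and $b:=\norm{x-x_0}$, the assumptions give $0\leq b\leq t$, $0\leq a\leq s$ and $t+s<R$, which are exactly the hypotheses of Lemma~\ref{l:errormon}. Its first inequality reads $e_f(a+b,b)\leq e_f(t+s,t)$, and chaining this with the display above yields the first assertion. For the second assertion, the second inequality of Lemma~\ref{l:errormon}, valid when $s\neq 0$, gives $e_f(a+b,b)\leq \tfrac{1}{2}\,[f'(t+s)-f'(t)]/s\,\,a^{2}$; since $a=\norm{y-x}$, combining this with the display above produces the quadratic bound.

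There is essentially no obstacle to overcome here: the substance is entirely contained in the preceding two lemmas, and the only point to check is that the inequalities $\norm{x-x_0}\leq t$ and $\norm{y-x}\leq s$ (rather than equalities) are precisely what the monotonicity of $e_f$ in \emph{both} of its slots is designed to exploit. The purpose of this corollary is purely to repackage that joint monotonicity into a form indexed by the majorizing radii $t$ and $s$, which is the convenient shape for estimating the inexact Newton iterations in the later sections.
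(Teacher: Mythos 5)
Your proposal is correct and follows exactly the paper's own argument: apply Lemma~\ref{pr:taylor} to bound $\|F'(x_0)^{-1}E_F(y,x)\|$ by $e_f(\norm{x-x_0}+\norm{y-x},\norm{x-x_0})$, then invoke Lemma~\ref{l:errormon} with $a=\norm{y-x}$ and $b=\norm{x-x_0}$ to obtain both stated inequalities. No gaps; the verification of the hypotheses of both lemmas is exactly as you describe.
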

\begin{proof}
The results follows by direct combination of the Lemmas \ref{pr:taylor} and \ref{l:errormon} by taking $b=\norm{x-x_0}$ and $a=\norm{y-x}$.
\end{proof}
The first inequality in the next corollary will be useful for obtaining  
asymptotic bounds on the sequence generated by the inexact Newton
method with relative error tolerance, while the second inequality will
be used to show that this method is robust with respect to the
initial iterate.
\begin{corollary}
  \label{cr:new.01}
  For any $y\in B(x_0,R)$,
  \[ -f(\Norm{y-x_0})\leq \Norm{F'(x_0)^{-1}F(y)}
   \leq f(\Norm{y-x_0})+2\Norm{y-x_0}.
 \]
\end{corollary}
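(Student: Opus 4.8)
The plan is to expand $F'(x_0)^{-1}F(y)$ around $x_0$ using the linearization error and then bound each resulting term with the already-established estimates. Concretely, the definition \eqref{eq:def.er} of $E_F(y,x_0)$ rearranges to the identity
\[
F'(x_0)^{-1}F(y)=F'(x_0)^{-1}F(x_0)+(y-x_0)+F'(x_0)^{-1}E_F(y,x_0),
\]
which is valid for every $y\in B(x_0,R)$ since $x_0\in\mathrm{int}(C)$ and $B(x_0,R)\subseteq C$. Both inequalities of the corollary will come from this single decomposition, the upper bound via the triangle inequality and the lower bound via the reverse triangle inequality applied to the middle term $(y-x_0)$.

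First I would estimate the three pieces. The first term is controlled by the initial hypothesis \eqref{eq:KH.2}, namely $\|F'(x_0)^{-1}F(x_0)\|\leq f(0)$; the middle term contributes exactly $\|y-x_0\|$. For the last term I apply Corollary~\ref{pr:taylor2} with $x=x_0$, $t=0$ and $s=\|y-x_0\|$ (the constraint $s+t<R$ holds because $y\in B(x_0,R)$), giving
\[
\|F'(x_0)^{-1}E_F(y,x_0)\|\leq e_f(\|y-x_0\|,0).
\]
The key simplification is that, by the definition \eqref{eq:def.ers} and the normalization $f'(0)=-1$ from \textbf{h1}, this bound evaluates to $e_f(\|y-x_0\|,0)=f(\|y-x_0\|)-f(0)+\|y-x_0\|$. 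This is the step where the structure of the majorant function does the real work, because it is precisely the cancellation of the $f(0)$ terms and the telescoping of the linear terms that will produce clean bounds.

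For the upper bound I would apply the triangle inequality to the decomposition and substitute the three estimates:
\[
\|F'(x_0)^{-1}F(y)\|\leq f(0)+\|y-x_0\|+\bigl[f(\|y-x_0\|)-f(0)+\|y-x_0\|\bigr]=f(\|y-x_0\|)+2\|y-x_0\|,
\]
which is the right-hand inequality. For the lower bound I would instead write $\|a+b+c\|\geq\|b\|-\|a\|-\|c\|$ with $b=(y-x_0)$, $a=F'(x_0)^{-1}F(x_0)$ and $c=F'(x_0)^{-1}E_F(y,x_0)$, obtaining
\[
\|F'(x_0)^{-1}F(y)\|\geq \|y-x_0\|-f(0)-\bigl[f(\|y-x_0\|)-f(0)+\|y-x_0\|\bigr]=-f(\|y-x_0\|),
\]
which is the left-hand inequality. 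I do not anticipate any genuine obstacle here: the argument is a one-line decomposition followed by the two triangle inequalities, and the only point demanding care is tracking signs in the reverse triangle inequality and correctly using $f'(0)=-1$ so that the extraneous $f(0)$ and linear terms cancel.
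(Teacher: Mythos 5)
Your proposal is correct and follows essentially the same route as the paper: both arguments rest on the decomposition $F'(x_0)^{-1}F(y)=F'(x_0)^{-1}F(x_0)+(y-x_0)+F'(x_0)^{-1}E_F(y,x_0)$, the bound $\Norm{F'(x_0)^{-1}E_F(y,x_0)}\leq e_f(\Norm{y-x_0},0)=f(\Norm{y-x_0})-f(0)+\Norm{y-x_0}$ from Lemma~\ref{pr:taylor} together with \textbf{h1} and \eqref{eq:KH.2}, and the two directions of the triangle inequality. The paper merely organizes the same computation starting from $e_f(\Norm{y-x_0},0)\geq\Norm{F'(x_0)^{-1}E_F(y,x_0)}$ and peeling off terms, so the difference is purely presentational.
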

\begin{proof}
  Using Lemma~\ref{pr:taylor} with $x=x_0$,   the definition of $E_F$
 and  triangle inequality
 we have
  \begin{align*}
    e_f(\Norm{y-x_0},0)\geq&\Norm{F'(x_0)^{-1}E_F(y,x_0)}\\
      \geq&\Norm{ F'(x_0)^{-1}F(x_0)+y-x_0}
       -\Norm{F'(x_0)^{-1}F(y)}\\
      \geq&\Norm{y-x_0}-\Norm{F'(x_0)^{-1}F(x_0)}
       -\Norm{F'(x_0)^{-1}F(y)}.
  \end{align*}
  Combining this inequality with the definition of $e_f$ and using the
  assumption {\bf h1} and \eqref{eq:KH.2}
  we obtain
  \[ 
  f(\Norm{y-x_0})-f(0)+\Norm{y-x_0}\geq
  \Norm{y-x_0}-f(0)-\Norm{F'(x_0)^{-1}F(y)},
  \]
  which is equivalent to the firs inequality of the corollary.

To prove the second inequality, use again 
Lemma~\ref{pr:taylor}  the definition of $E_F$
 and  triangle inequality to obtain
 \begin{align*}
    e_f(\Norm{y-x_0},0)\geq&\Norm{F'(x_0)^{-1}E_F(y,x_0)}\\
      \geq& \Norm{F'(x_0)^{-1}F(y)}- 
      \Norm{F'(x_0)^{-1}F(x_0)+y-x_0}\\
      \geq&\Norm{F'(x_0)^{-1}F(y)}
      -\Norm{F'(x_0)^{-1}F(x_0)}-\Norm{y-x_0}.
  \end{align*}
Using the above inequality, the definition of $e_f$,   {\bf h1} and \eqref{eq:KH.2} we have
\[ f(\Norm{y-x_0})-f(0)+\Norm{y-x_0}\geq\Norm{F'(x_0)^{-1}F(y)}
      -f(0)-\Norm{y-x_0}.
\]
 which is equivalent to the second inequality of the corollary.
\end{proof}
Note that the first inequality on the above corollary proves that $F$
has no zeroes in the region $t_*< \Norm{x-x_0}<\bar \tau$.
\begin{lemma} \label{l:bd}
If $x\in \banacha$, $\norm{x-x_0}\leq t< R$ then
\[
\|F'(x_0)^{-1}F'(x)\|\leq 2+f'(t).
\]
\end{lemma}
\begin{proof}
Simple algebraic manipulation together with assumption \eqref{MC.2} give us
\[
\|F'(x_0)^{-1}F'(x)\|\leq I+F'(x_0)^{-1}[F'(x)-F'(x_0)]\leq 1+f'(\norm{x-x_0})-f'(0).
\]
Hence,    {\bf h1}, {\bf h2}  and the last inequality imply the statement of the lemma.
\end{proof}
\begin{lemma}  \label{pr:cq1}
Take $\theta\geq 0$, $0\leq t\leq\lambda$, $x_*, x, y\in  \banacha$. If  $\lambda<R$, 
$\norm{x-x_0}\leq t$,  $\norm{x_*-x}\leq \lambda-t$, $F(x_*)=0$ and
\begin{equation} \label{eq:y.in1}
\Norm{F'(x_0)^{-1}[F(x)+F'(x)(y-x)]}\leq \theta \norm{F'(x_0)^{-1}F(x)},
\end{equation} 
  then
\begin{align}
\|x_*-y\|&\leq \left[ \frac{1+\theta}{2}
+\frac{2\theta}{\kappa}\right]\|x_*-x\|,\\
\|x_*-y\|&\leq \left[ \frac{1+\theta}{2}\frac{D^{-}f'(\lambda)}{|f'(\lambda)|}\|x_*-x\|
+\theta\,\frac{2+f'(\lambda)}{|f'(\lambda)|}\right]\|x_*-x\|.
\end{align}
\end{lemma}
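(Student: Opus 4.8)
The plan is to derive one exact identity for $x_*-y$, estimate its two pieces with the majorant function, and then read off the two conclusions by bounding a single divided difference of $f'$ in two different ways. Throughout set $s:=\|x_*-x\|$ and write the preconditioned residual as $r:=F'(x_0)^{-1}[F(x)+F'(x)(y-x)]$, so that \eqref{eq:y.in1} becomes $\|r\|\le\theta\,\|F'(x_0)^{-1}F(x)\|$. Since $\|x-x_0\|\le t\le\lambda$ and, by Proposition~\ref{pr:new}, $\lambda\le\bar t$ with $f'(\lambda)<0$, the point $x$ falls under Proposition~\ref{pr:banach}: $F'(x)$ is non-singular and $\|F'(x)^{-1}F'(x_0)\|\le 1/|f'(t)|\le 1/|f'(\lambda)|$. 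The central step is to combine the definition of $r$ with the exact expansion $0=F(x_*)=F(x)+F'(x)(x_*-x)+E_F(x_*,x)$: solving each for the corresponding increment and subtracting, the exact Newton terms $\pm F'(x)^{-1}F(x)$ cancel, leaving
\[
x_*-y=-\,F'(x)^{-1}E_F(x_*,x)-F'(x)^{-1}F'(x_0)\,r.
\]
Taking norms and factoring out $\|F'(x)^{-1}F'(x_0)\|$ gives $\|x_*-y\|\le \frac{1}{|f'(t)|}\big(\|F'(x_0)^{-1}E_F(x_*,x)\|+\|r\|\big)$.

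I would next bound the two inner quantities. The linearization error is handled by Corollary~\ref{pr:taylor2} applied with the point $x_*$ (legitimate since $\|x-x_0\|+\|x_*-x\|\le t+s\le\lambda<R$), giving $\|F'(x_0)^{-1}E_F(x_*,x)\|\le \tfrac12\,\frac{f'(t+s)-f'(t)}{s}\,s^2$. For the residual I re-express $F(x)=-F'(x)(x_*-x)-E_F(x_*,x)$ and apply Lemma~\ref{l:bd}, $\|F'(x_0)^{-1}F'(x)\|\le 2+f'(t)$, to obtain $\|F'(x_0)^{-1}F(x)\|\le (2+f'(t))\,s+\|F'(x_0)^{-1}E_F(x_*,x)\|$; this is the crucial coupling of the residual to $\|x_*-x\|$. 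Substituting both estimates, the two occurrences of the linearization error combine into a factor $1+\theta$ and I arrive at the master inequality
\[
\|x_*-y\|\le \frac{1}{|f'(t)|}\left[\frac{1+\theta}{2}\,\big(f'(t+s)-f'(t)\big)\,s+\theta\,(2+f'(t))\,s\right].
\]

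From this single estimate both conclusions follow by bounding the slope $\frac{f'(t+s)-f'(t)}{s}$ appropriately. For the asymptotic (second) inequality I use that $f'$ is convex and $t+s\le\lambda$, so the divided difference does not decrease under the rightward shift of $[t,t+s]$ to $[\lambda-s,\lambda]$ (the monotonicity already used in Lemma~\ref{l:errormon}), whence $\frac{f'(t+s)-f'(t)}{s}\le\frac{f'(\lambda)-f'(\lambda-s)}{s}\le D^-f'(\lambda)$ by Proposition~\ref{pr:conv.aux1}(1); together with $2+f'(t)\le 2+f'(\lambda)$ and $1/|f'(t)|\le 1/|f'(\lambda)|$, collecting terms yields the bound with $D^-f'(\lambda)/|f'(\lambda)|$ and $(2+f'(\lambda))/|f'(\lambda)|$. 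For the first inequality I instead use the coarser bounds $\frac{f'(t+s)-f'(t)}{-f'(t)}\le 1$ (valid because $f'(t+s)\le f'(\lambda)\le 0$ on $[0,\lambda]$) and $\frac{2+f'(t)}{-f'(t)}=\frac{2}{-f'(t)}-1\le\frac{2}{\kappa}-1$, where $-f'(t)\ge\kappa$ by Proposition~\ref{pr:new}; the bracket then collapses to $\frac{1+\theta}{2}+\frac{2\theta}{\kappa}$, as desired. (The degenerate case $s=0$ forces $y=x=x_*$ and both inequalities hold trivially.)

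The step I expect to be the main obstacle is the residual estimate. The temptation is to bound $\|F'(x_0)^{-1}F(x)\|$ directly by Corollary~\ref{cr:new.01}, but that produces a term that is not obviously proportional to $\|x_*-x\|$ and would destroy the contraction; re-expanding $F(x)$ about the root $x_*$ instead is what makes the residual contribution of order $\|x_*-x\|$ and hence keeps the iteration $Q$-linear. A secondary delicate point is selecting the right convexity bound on $\frac{f'(t+s)-f'(t)}{s}$ for each target — the sharp $D^-f'(\lambda)$ estimate for the asymptotic rate versus the uniform bound $\le 1$ for the constant contraction factor — and checking that $f'(t+s)\le 0$ and $-f'(t)\ge\kappa$ hold on all of $[0,\lambda]$, both of which rest on Proposition~\ref{pr:new}.
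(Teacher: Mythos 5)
Your proof is correct and follows essentially the same route as the paper: the same identity $x_*-y=-F'(x)^{-1}\bigl[E_F(x_*,x)+(F(x)+F'(x)(y-x))\bigr]$, the same re-expansion of $F(x)$ about the root via Lemma~\ref{l:bd} to make the residual term proportional to $\|x_*-x\|$, and the same combination of Proposition~\ref{pr:banach} and Corollary~\ref{pr:taylor2}. The only (immaterial) difference is that the paper invokes Corollary~\ref{pr:taylor2} directly with $s=\lambda-t$, so the slope $\frac{f'(\lambda)-f'(t)}{\lambda-t}$ appears at once, whereas you take $s=\|x_*-x\|$ and then push the divided difference up to $D^-f'(\lambda)$ by the shift-monotonicity of convex $f'$ — both yield the stated bounds.
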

\begin{proof}
Since $F(x_*)=0$, direct algebraic manipulation and \eqref{eq:def.er}  yield
\[
y-x_*=F'(x)^{-1}\left[E_F(x_*, x)+[F(x)+F'(x)(y-x)]\right].
\]
Using \eqref{eq:y.in1},  properties of the norm and some simple  manipulations we conclude from last equality  that 
\[
\|x_*-y\|\leq \Norm{F'(x)^{-1}F'(x_0)}\left[\Norm{F'(x_0)^{-1}E_F(x_*, x)}+\theta \Norm{F(x_0)^{-1}F(x)} \right].
\]
On the other hand, using again $F(x_*)=0$ and the definition in \eqref{eq:def.er} we have 
\[
-F'(x_0)^{-1}F(x)=F'(x_0)^{-1}\left[E_{F}(x_*, x)+F'(x)(x_*- x)\right],
\]
which using the triangular inequality yields
\[
\Norm{F'(x_0)^{-1}F(x)}\leq \Norm{F'(x_0)^{-1}E_{F}(x_*, x)}+ \Norm{F'(x_0)^{-1}F'(x)}\Norm{x_*-x}.
\]
Combining two above inequalities with  Proposition~\ref{pr:banach}, Corollary~\ref{pr:taylor2} with $y=x_*$ and $s=\lambda-t$ and Lemma~\ref{l:bd} we have
\[
\|x_*-y\|\leq \frac{1}{|f'(t)|}\left[ \frac{1+\theta}{2} \frac{f'(\lambda)-f'(t)}{\lambda-t}\|x_*-x\|
+\theta\;[2+f'(t)]\right]\|x_*-x\|.
\]
Since $\|x_*-x\|\leq  \lambda-t$,  $f'<-\kappa<0$ in $[0, \lambda) $  and $f'$ is increasing the first inequality follows  from last inequality.

Using  Proposition~\ref{pr:conv.aux1} and taking in account that $f'<0$ in $[0, \lambda)$ and increasing  we obtain the second inequality from above inequality.
\end{proof}
\section{The inexact Newton iteration with relative error}\label{sec:inpso}
In the next lemma we study a single inexact Newton iteration with 
relative error $\theta$.
\begin{lemma}
  \label{lm:bas} Take $t,\varepsilon,\theta\geq 0$, and $x\in C$ such that
  \begin{equation}
    \label{eq:ddks} \norm{x-x_0}\leq t<\bar t,\quad
    \norm{F'(x_0)^{-1}F(x)}\leq f(t)+\varepsilon, \quad
    t-(1+\theta)\frac{f(t)+\varepsilon}{f'(t)}< R.
  \end{equation} 
  If $y\in X$ and
  \begin{equation}
    \label{eq:y.in} \norm{F'(x_0)^{-1}[F(x)+F'(x)(y-x)]}\leq \theta
\norm{F'(x_0)^{-1}F(x)}.
\end{equation} then
\begin{enumerate}
\item \( \|y-x\|\leq \displaystyle
  -(1+\theta)\frac{f(t)+\varepsilon}{f'(t)}; \)
\item \( \|y-x_0\|\leq \displaystyle
  t-(1+\theta)\frac{f(t)+\varepsilon}{f'(t)}<R; \)
\item 
  \( \|F'(x_0)^{-1}F(y)\| \leq \displaystyle f \left(t- (1+\theta)
    \frac{f(t)+\varepsilon}{f'(t)}\right) +
  \varepsilon+2\theta(f(t)+\varepsilon).  \)
\end{enumerate}
\end{lemma}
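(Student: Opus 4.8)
The plan is to establish the three items in sequence, extracting at the outset the two tools that every step relies on. Since $\|x-x_0\|\leq t<\bar t$, the definition \eqref{eq:def.bart} gives $f'(t)<0$, and Proposition~\ref{pr:banach} applies, so $F'(x)$ is non-singular with $\|F'(x)^{-1}F'(x_0)\|\leq 1/(-f'(t))$. I would also record that the residual bound $\|F'(x_0)^{-1}F(x)\|\leq f(t)+\varepsilon$ forces $f(t)+\varepsilon\geq 0$ (a norm is non-negative), so that, writing $t^{+}:=t-(1+\theta)(f(t)+\varepsilon)/f'(t)$ for the scalar iterate in the statement, the increment $t^{+}-t=-(1+\theta)(f(t)+\varepsilon)/f'(t)$ is itself non-negative.

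For item 1, I would start from the identity $y-x=F'(x)^{-1}F'(x_0)\,F'(x_0)^{-1}\bigl[(F(x)+F'(x)(y-x))-F(x)\bigr]$, take norms, and apply the operator-norm bound above together with the triangle inequality. The bracketed residual is at most $\theta\|F'(x_0)^{-1}F(x)\|$ by \eqref{eq:y.in}, so the two contributions combine into $(1+\theta)\|F'(x_0)^{-1}F(x)\|\leq(1+\theta)(f(t)+\varepsilon)$; dividing by $-f'(t)$ gives exactly $\|y-x\|\leq -(1+\theta)(f(t)+\varepsilon)/f'(t)=t^{+}-t$. Item 2 is then immediate: the triangle inequality yields $\|y-x_0\|\leq\|y-x\|+\|x-x_0\|\leq(t^{+}-t)+t=t^{+}$, and the strict bound $t^{+}<R$ is precisely the third hypothesis in \eqref{eq:ddks}.

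The substantive step is item 3. I would decompose $F(y)=\bigl[F(x)+F'(x)(y-x)\bigr]+E_F(y,x)$ using the definition \eqref{eq:def.er} and bound the two pieces separately. The first is at most $\theta(f(t)+\varepsilon)$ by \eqref{eq:y.in} and the residual hypothesis. For the second I would invoke Corollary~\ref{pr:taylor2} with $s:=t^{+}-t$: its hypotheses $\|x-x_0\|\leq t$ and $\|y-x\|\leq s$ hold by item 1, while $s+t=t^{+}<R$ holds by item 2, so $\|F'(x_0)^{-1}E_F(y,x)\|\leq e_f(t^{+},t)$. The heart of the computation is then to unfold $e_f(t^{+},t)=f(t^{+})-f(t)-f'(t)(t^{+}-t)$ via \eqref{eq:def.ers} and substitute $f'(t)(t^{+}-t)=-(1+\theta)(f(t)+\varepsilon)$, which simplifies $e_f(t^{+},t)$ to $f(t^{+})+\theta(f(t)+\varepsilon)+\varepsilon$. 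Adding the two bounds gives $\|F'(x_0)^{-1}F(y)\|\leq f(t^{+})+2\theta(f(t)+\varepsilon)+\varepsilon$, which is the asserted inequality.

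The only point requiring genuine care is confirming the hypotheses of Corollary~\ref{pr:taylor2} before applying it — specifically, that the choice $s=t^{+}-t$ is a legitimate upper bound for $\|y-x\|$ (from item 1) and that $s+t<R$ (from item 2). Once these are in place the argument closes by the exact substitution in $e_f(t^{+},t)$, and I expect that cancellation, rather than any analytic difficulty, to be the crux of item 3.
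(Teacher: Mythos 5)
Your proposal is correct and follows essentially the same route as the paper: the same resolvent identity for $y-x$ via Proposition~\ref{pr:banach}, the same decomposition $F(y)=[F(x)+F'(x)(y-x)]+E_F(y,x)$, and the same cancellation in $e_f(t^{+},t)$ using $f'(t)(t^{+}-t)=-(1+\theta)(f(t)+\varepsilon)$. The only cosmetic difference is that you invoke Corollary~\ref{pr:taylor2} (which packages the monotonicity needed when $\|x-x_0\|<t$ and $\|y-x\|<s$) where the paper cites Lemma~\ref{pr:taylor} directly; this is, if anything, slightly more careful.
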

\begin{proof} 
Using Proposition \ref{pr:banach} and the first inequality in
\eqref{eq:ddks} we conclude that $F'(x)$ is non-singular and
$\Norm{F'(x)^{-1}F'(x_0)}\leq -1/f'(t)$. Therefore, using also the identity
 \begin{align*} y-x
&=F'(x)^{-1}F'(x_0)\bigg[
F'(x_0)^{-1}\left[F(x)+F'(x)(y-x)\right]
-F'(x_0)^{-1}F(x)\bigg],
\end{align*} 
triangular inequality and \eqref{eq:y.in}
we conclude that 
\[
\norm{y-x}\leq \frac{-1}{f'(t)}(1+\theta) \Norm{F'(x_0)^{-1}F(x)}\;.
\]
To end the proof of item 1, use the above inequality and the second
inequality on \eqref{eq:ddks}.

Item 2 follows from triangular inequality, item 1 and   the first
and the third inequalities in \eqref{eq:ddks}.

Using the definition of the error \eqref{eq:def.er} we
have
\[ F(y)=E_F(y,x)+F'(x_0)\bigg[
F'(x_0)^{-1}[F(x)+F'(x)(y-x)]
\bigg].
\]
Therefore, using  the triangle inequality,  \eqref{eq:y.in} 
and the second inequality on \eqref{eq:ddks}
 we have
 \begin{align*}
    \norm{F'(x_0)^{-1}F(y)}\leq& \norm{F'(x_0)^{-1}E_F(y,x)}+\theta
 \Norm{F'(x_0)^{-1}F(x)}\\
\leq&
 \norm{F'(x_0)^{-1}E_F(y,x)}+\theta(f(t)+\varepsilon)  .
 \end{align*}
Using \eqref{eq:ddks}, item 1, and Lemma~\ref{pr:taylor} with
$s=-(1+\theta)(f(t)+\varepsilon)/f'(t)$ we have
\begin{align*}
   \norm{F'(x_0)^{-1}E_F(y,x)}\leq&
e_f\left(t-(1+\theta)\frac{f(t)+\varepsilon}{f'(t)}, t\right)\\
=&f\left(t-(1+\theta)\frac{f(t)+\varepsilon}{f'(t)}\right)
+\varepsilon+
\theta ( f(t)+\varepsilon).
\end{align*}
Direct combination of the two above equation  yields the latter
inequality in item 3.
\end{proof} 
In view of Lemma~\ref{lm:bas} define, for $\theta\geq 0$, the auxiliary map
$n_{\theta}: [0,\bar t\,)\times [0,\infty) \to \mathbb{R}\times
\mathbb{R}$,
\begin{equation} \label{eq:nftheta}
  \displaystyle n_{\theta}(t,\varepsilon):
  =\left(t-(1+\theta)\frac{f(t)+\varepsilon}{f'(t)},\;\varepsilon+
 2\theta(f(t)+\varepsilon)\right).
\end{equation}
Let 
\begin{equation}
  \label{eq:dt} {\Omega}:=\left \{
(t,\varepsilon)\in\mathbb{R}\times\mathbb{R}\;:\; 0\leq t<\lambda,
  \;\; 0\leq\varepsilon\leq \kappa t,\;\; 0<f(t)+\varepsilon \right\}.
\end{equation}
\begin{lemma}
  \label{lm:id.t}
  If $0\leq \theta\leq \tildetheta$, $(t,\varepsilon)\in{\Omega}$ 
  and $(t_+,\varepsilon_+)=n_{\theta}(t,\varepsilon)$, that is,
  \[
  t_+=t-(1+\theta)\frac{f(t)+\varepsilon}{f'(t)}, \qquad
  \varepsilon_+=\varepsilon+2\theta (f(t)+\varepsilon),
  \]
  then $n_\theta(t,\varepsilon)\in {\Omega}$, $t<t_+$, $ \varepsilon \leq
  \varepsilon_+$ and
  \[
  f(t_+)+\varepsilon_+<\left(\frac{1+\theta^2}{2}\right)(f(t)+\varepsilon).
  \]
\end{lemma}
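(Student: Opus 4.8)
The plan is to reduce everything to the single positive scalar $w := f(t)+\varepsilon$, which is positive because $(t,\varepsilon)\in\Omega$. Throughout I use the two standing facts about the majorant on $[0,\lambda)$ coming from Proposition~\ref{pr:new}: there $f'(s)<-\kappa<0$, while $f'(0)=-1$ and $f'$ increasing give $|f'(s)|\le 1$; thus $f'(t)$ is strictly negative with $\kappa<|f'(t)|\le 1$. The two monotonicity claims are then immediate: $t_+-t=-(1+\theta)w/f'(t)=(1+\theta)w/|f'(t)|>0$ gives $t<t_+$, and $\varepsilon_+-\varepsilon=2\theta w\ge 0$ gives $\varepsilon\le\varepsilon_+$.

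The next step is the crucial algebraic identity. Writing $f(t_+)$ through its linearization error, $f(t_+)=f(t)+f'(t)(t_+-t)+e_f(t_+,t)$, and using $f'(t)(t_+-t)=-(1+\theta)w$ together with $f(t)+\varepsilon=w$ and $\varepsilon_+=\varepsilon+2\theta w$, everything collapses to $f(t_+)+\varepsilon_+=\theta w+e_f(t_+,t)$. Hence the target estimate $f(t_+)+\varepsilon_+<\tfrac{1+\theta^2}{2}w$ is \emph{equivalent} to the single inequality $e_f(t_+,t)<\tfrac{(1-\theta)^2}{2}w$; moreover the strict positivity $0<f(t_+)+\varepsilon_+$ needed for membership in $\Omega$ comes for free, since $e_f(t_+,t)>0$ by strict convexity of $f$ (as $t_+>t$) and $\theta w\ge 0$.

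The heart of the argument, and the only place I expect real difficulty, is proving $e_f(t_+,t)<\tfrac{(1-\theta)^2}{2}w$: the crude bound from $f'(t_+)<-\kappa$ alone is too weak once $\theta>0$, so one must exploit that $w$ is controlled by the distance to $\lambda$. Two ingredients combine. First, Lemma~\ref{l:errormon} with reference point $\lambda$ (that is, $T=t$, $S=\lambda-t$, legitimate because $t_+<\lambda$) gives $e_f(t_+,t)\le\tfrac12\tfrac{f'(\lambda)-f'(t)}{\lambda-t}(t_+-t)^2$, where $f'(\lambda):=\lim_{s\to\lambda_{-}}f'(s)=-\kappa$ exists by monotonicity. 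Second, $w\le f(t)+\kappa t<(|f'(t)|-\kappa)(\lambda-t)=(f'(\lambda)-f'(t))(\lambda-t)$, which is exactly Proposition~\ref{pr:nwt.imp} applied to the convex function $s\mapsto f(s)+\kappa s$ on $[t,\lambda)$ — its derivative $f'(t)+\kappa$ is negative and it tends to $0$ at $\lambda_{-}$ by Proposition~\ref{pr:new} — the inequality being strict because $f$ is strictly convex. Substituting $t_+-t=(1+\theta)w/|f'(t)|$ and then the second bound into the first yields $e_f(t_+,t)<\tfrac{w}{2}(1+\theta)^2\big(1-\kappa/|f'(t)|\big)^2$, so it remains to verify $(1+\theta)\big(1-\kappa/|f'(t)|\big)\le 1-\theta$. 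This rearranges to $2\theta|f'(t)|\le(1+\theta)\kappa$, and since $|f'(t)|\le 1$ it suffices that $2\theta\le(1+\theta)\kappa$, i.e. $\theta(2-\kappa)\le\kappa$, i.e. $\theta\le\kappa/(2-\kappa)=\tildetheta$ — precisely the hypothesis. The boundary case $\lambda=R$ is handled by taking $S=\tau-t$ with $\tau\uparrow\lambda$ and passing to the limit.

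Finally I would collect the membership $(t_+,\varepsilon_+)\in\Omega$. Positivity of $\varepsilon_+$ and of $f(t_+)+\varepsilon_+$ are already in hand. That $t_+<\lambda$ follows from the same bound on $w$: $t_+-t=(1+\theta)w/|f'(t)|<(1+\theta)(|f'(t)|-\kappa)(\lambda-t)/|f'(t)|\le\lambda-t$, the last step using $\theta|f'(t)|\le(1+\theta)\kappa$. And $\varepsilon_+\le\kappa t_+$ follows by adding $\varepsilon\le\kappa t$ to $2\theta w\le\kappa(t_+-t)$, the latter being once more the inequality $2\theta|f'(t)|\le(1+\theta)\kappa$. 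Thus the one threshold $\theta\le\tildetheta$ (via $2\theta|f'(t)|\le(1+\theta)\kappa$) governs every step, which is the structural reason the lemma holds exactly up to $\tildetheta$.
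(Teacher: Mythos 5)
Your proof is correct and follows essentially the same route as the paper's: the same identity $f(t_+)+\varepsilon_+=\theta(f(t)+\varepsilon)+e_f(t_+,t)$, the same application of Proposition~\ref{pr:nwt.imp} to $s\mapsto f(s)+\kappa s$ to control $f(t)+\varepsilon$ by $\lambda-t$, the same convexity bound on $e_f(t_+,t)$ (you cite Lemma~\ref{l:errormon}, the paper re-derives it inline and lets $s\to\lambda_-$ at the end), and the same threshold computation $\theta\le\tildetheta$ via $(1+\theta)(1-\kappa)\le 1-\theta$. The only cosmetic differences are that you retain the sharper factor $1-\kappa/|f'(t)|$ where the paper passes to $1-\kappa$ early, and that your identification $\lim_{s\to\lambda_-}f'(s)=-\kappa$ is an equality only when $\lambda<R$ (when $\lambda=R$ the limit may be strictly smaller, which only strengthens your estimates, so nothing breaks).
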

\begin{proof}
 Since $ 0\leq t<\lambda$,   according to \eqref{eq:dktt.0} we have
  $f'(t)<-\kappa<0$. Therefore $t<t_+$ and $\varepsilon\leq\varepsilon_+$.
  
As $\varepsilon\leq\kappa t$, $f(t)+\varepsilon>0$ and $-1\leq f'(t)<f'(t)+\kappa<0$,
\begin{align}
\nonumber
 -\frac{f(t)+\varepsilon}{f'(t)}\leq& 
 -\frac{f(t)+\kappa \,t}{f'(t)}\\
 \label{eq:aux.01}
   =& -\frac{f(t)+\kappa \,t}{f'(t)+\kappa}\;\left[1+\frac{\kappa}{f'(t)}\right]\leq
    -\frac{f(t)+\kappa \,t}{f'(t)+\kappa}(1-\kappa).
\end{align}
The function $h(s):=f(s)+\kappa s$ is differentiable at $t$, $h'(t)<0$,
is strictly convex and 
\[ \lim_{s\to\lambda_{-}} h(t)=0.
\]
Therefore, using Proposition~\ref{pr:nwt.imp} we have $ t-h(t)/h'(t)<\lambda,$
which is equivalent to
\begin{equation}
  \label{eq:aux.02}
  -\frac{f(t)+\kappa \,t}{f'(t)+\kappa}< \lambda-t.  
\end{equation}
Combining the above inequality with \eqref{eq:aux.01} and the
definition of $t_+$  we conclude that
\[ t_+< t+(1+\theta)(1-\kappa)(\lambda-t).
\]
Using \eqref{eq:dktt.0} and \eqref{eq:bd.pr.a} we have $(1+\theta)(1-\kappa)\leq1-\theta<1$, which combined with the above
inequality yields $t_+<\lambda$.

  Using the definition of $\varepsilon_+$, inequality $\varepsilon
\leq \kappa\;t$ and \eqref{eq:dktt.0} we obtain
  \begin{align*} \varepsilon_+&\leq 2\theta(f(t)+\varepsilon)+\kappa\; t\\ &=\kappa
\left(t+(1+\theta)(f(t)+\varepsilon)\right).
  \end{align*} Using again the inequalities $f(t)+\varepsilon>0$ and
$-1\leq f'(t)<0$ we have
  \[f(t)+\varepsilon\leq -\frac{f(t)+\varepsilon}{f'(t)}.
  \] Combining the two above inequalities with the definition of $t_+$
we obtain $\varepsilon_+\leq \kappa\,t_+$.

  For  proving the two last inequalities, first note that from the definition
of the linearization error in \eqref{eq:def.ers} we have
\begin{align*} f(t_+)+\varepsilon_+&=
f(t)+f'(t)(t_+-t)+e_f(t_+,t)+2\theta(f(t)+\varepsilon)+\varepsilon\notag\\
&=\theta(f(t)+\varepsilon)+e_f(t_+,t) \\
&=\theta(f(t)+\varepsilon)+\int_{t}^{t_+}\left(f'(u)-f'(t)\right)du. \notag
\end{align*} 
Since $f'$ is strictly increasing we conclude that the integral is
positive. So, last equality implies that $ f(t_+)+\varepsilon_+\geq
\theta (f(t)+\varepsilon)>0$. 
Taking  $s\in [t_+,\lambda)$ and using the convexity of $f'$ we have
\begin{align*} 
\int_{t}^{t_+}\left(f'(u)-f'(t)\right) du &\leq
\int_{t}^{t_+}\left( f'(s)-f'(t)\right) \frac{u-t}{s-t} \;du \\
 &=\frac{1}{2}\frac{(t_+-t)^2}{s-t}\left(
f'(s)-f'(t)\right).
 \end{align*} 
Substituting last inequality into above equation we have
\begin{align*}
f(t_+)+\varepsilon_+&\leq\theta(f(t)+\varepsilon)
+\frac{1}{2}\frac{(t_+-t)^2}{s-t}\left( f'(s )-f'(t)\right)\\
&=\left(\theta+\frac{1}{2}\,\frac{(1+\theta)^2}{(s-t)}\,
\frac{f(t)+\varepsilon}{-f'(t)}\,\frac{f'(s)-f'(t)}{-f'(t)} \right)(f(t)+\varepsilon).
\end{align*}
On the other hand, because $f'(s )+\kappa<0$ and
$-1\leq f'(t)$ it easy to conclude that
\[
 \frac{f'(s )-f'(t)}{-f'(t)}=\frac{f'(s)+\kappa-f'(t)-\kappa}{-f'(t)}\leq 1-\kappa.
\]
Combining last two above inequalities with \eqref{eq:aux.01}, \eqref{eq:aux.02}  and taking
in account that   $(1+\theta)(1-\kappa)\leq1-\theta$ we
conclude that
\begin{align*}
 f(t_+)+\varepsilon_+\leq&
\left(\theta+\frac{1}{2}(1+\theta)^2(1-\kappa)^2\frac{\lambda-t}{s-t}\right)
(f(t)+\varepsilon)\\
=&\left(\theta+\frac{1}{2}(1-\theta)^2\frac{\lambda-t}{s-t}\right)(f(t)+\varepsilon),  
\end{align*}
and the result follows taking the limit $s\to\lambda_{-}$.
\end{proof}
The outcome of an inexact Newton iteration is any point satisfying
some error tolerance. Hence, instead of a mapping for Newton
iteration, we shall deal with a \emph{family} of mappings, describing
all possible inexact
iterations.
\begin{definition}
  \label{def:inire}
  For $0\leq \theta$, $\mathcal{N}_\theta$ is the family of maps
  $N_\theta:B(x_0,\bar t)\to X$ such that
  \begin{equation}
    \label{eq:in.map}
    \Norm{F'(x_0)^{-1}[F(x)+F'(x)(N_\theta(x)-x)]}\leq \theta
    \Norm{F'(x_0)^{-1}F(x)},
  \end{equation}
  for each $x\in B(x_0,\bar t\,)$.
\end{definition}
If $x\in B(x_0,\bar t)$, then $F'(x)$ is
non-singular. Therefore, for $\theta= 0$, the family $\mathcal{N}_0$ has a
single element, 
namely the exact Newton iteration map
\[N_0:B(x_0,\bar t)\to \banacha, \quad x\mapsto N_0(x)= x-F'(x)^{-1}F(x).
\]
Trivially, if $0\leq \theta \leq \theta '$ then 
$\mathcal{N}_0\subset\mathcal{N}_\theta\subset\mathcal{N}_{\theta '}$.
Hence, $\mathcal{N}_\theta$ is non-empty for all $\theta\geq 0$.
\begin{remark}
  For any $\theta \in (0,1)$ and $N_\theta\in\mathcal{N}_\theta$
  \[ N_\theta(x)=x\iff F(x)=0,\qquad x\in B(x_0,\bar t).
  \]
  This means that the fixed point of the inexact Newton iteration
  $N_\theta$ are the same fixed points of the \emph{exact} Newton
  iteration, namely,  the zeros of $F$.
\end{remark}
The main tool for the analysis of the inexact Newton method with a
relative residual tolerance will be a family of sets described below
and analyzed in the ensuing proposition, which is a combination of Lemmas
\ref{lm:bas} and \ref{lm:id.t}. Define
\begin{equation}
  \label{eq:ker} K(t,\varepsilon):=\left\{x\in X\;:\; \Norm{x-x_0}\leq
t,\;\Norm{F'(x_0)^{-1}F(x)}\leq f(t)+\varepsilon\right\},
\end{equation} 
and
\begin{equation}
  \label{eq:kt}  
K:=\bigcup_{(t,\varepsilon)\in {\Omega}}
  K(t,\varepsilon).
\end{equation}
Recall that $n_\theta$, $\Omega$ and $\mathcal{N}_\theta$ were defined in
\eqref{eq:nftheta}, \eqref{eq:dt} and Definition \ref{def:inire} respectively.

\begin{proposition}
  \label{pr:syn} 
   Take $0\leq \theta\leq \tildetheta$ and $N_\theta\in \mathcal{N}_\theta$. Then 
  for any $(t,\varepsilon)\in {\Omega}$ and
  $x\in K(t,\varepsilon)$
  \[
  N_\theta(K(t,\varepsilon))\subset K(n_\theta(t,\varepsilon))
  \subset K,\qquad \Norm{N_\theta(x)-x}\leq t_+-t,
  \]
 where $t_+$ is the first component of $n_\theta(t,\varepsilon)$.
  Moreover,
 \begin{equation} 
    \label{eq:ist} 
    n_\theta\left( {\Omega}\right)\subset {\Omega}, 
    \qquad N_\theta\left( K\right)\subset  K.
  \end{equation} 
\end{proposition}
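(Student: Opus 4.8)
The plan is to recognize this proposition as essentially a bookkeeping combination of the two preceding lemmas, exactly as the surrounding text advertises: Lemma~\ref{lm:id.t} governs the scalar map $n_\theta$ on $\Omega$, while Lemma~\ref{lm:bas} governs a single inexact Newton step. I would open by disposing of the inclusion $n_\theta(\Omega)\subset\Omega$, which is immediate: it is literally the assertion $n_\theta(t,\varepsilon)\in\Omega$ from Lemma~\ref{lm:id.t}, valid for every $(t,\varepsilon)\in\Omega$ under the standing hypothesis $0\leq\theta\leq\tildetheta$.

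Next I would fix $(t,\varepsilon)\in\Omega$ and $x\in K(t,\varepsilon)$, set $y:=N_\theta(x)$ and $(t_+,\varepsilon_+):=n_\theta(t,\varepsilon)$, and verify the three hypotheses \eqref{eq:ddks} of Lemma~\ref{lm:bas}. The membership $x\in K(t,\varepsilon)$ supplies $\|x-x_0\|\leq t$ and $\|F'(x_0)^{-1}F(x)\|\leq f(t)+\varepsilon$ directly; since $(t,\varepsilon)\in\Omega$ forces $t<\lambda$, and Propositions~\ref{pr:new} and~\ref{pr:maj.f} give $\lambda\leq\bar t\leq R$, I obtain both $\|x-x_0\|\leq t<\bar t$ and, using $t_+<\lambda\leq R$ from Lemma~\ref{lm:id.t}, the third inequality $t_+<R$. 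The residual bound \eqref{eq:y.in} is just the defining property \eqref{eq:in.map} of $N_\theta\in\mathcal{N}_\theta$ evaluated at $x$, which lies in the domain $B(x_0,\bar t)$ precisely because $\|x-x_0\|\leq t<\bar t$.

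With the hypotheses in place, applying Lemma~\ref{lm:bas} delivers all three conclusions at once. Item~1 reads $\|y-x\|\leq t_+-t$, which is exactly the stepsize bound $\|N_\theta(x)-x\|\leq t_+-t$. Items~2 and~3, after substituting $\varepsilon_+=\varepsilon+2\theta(f(t)+\varepsilon)$, become $\|y-x_0\|\leq t_+$ and $\|F'(x_0)^{-1}F(y)\|\leq f(t_+)+\varepsilon_+$, which together are precisely the two defining inequalities of membership in $K(t_+,\varepsilon_+)=K(n_\theta(t,\varepsilon))$. Hence $N_\theta(K(t,\varepsilon))\subset K(n_\theta(t,\varepsilon))$; and since $n_\theta(t,\varepsilon)\in\Omega$ by the first step, the definition \eqref{eq:kt} of $K$ as the union of the $K(s,\delta)$ over $(s,\delta)\in\Omega$ yields $K(n_\theta(t,\varepsilon))\subset K$.

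Finally I would derive the global invariance $N_\theta(K)\subset K$ by a union argument: any $x\in K$ lies in some $K(t,\varepsilon)$ with $(t,\varepsilon)\in\Omega$, and the chain just established gives $N_\theta(x)\in K(n_\theta(t,\varepsilon))\subset K$; taking the union over all such $x$ closes the claim. I do not anticipate a genuine obstacle, since the analytic content is entirely carried by Lemmas~\ref{lm:bas} and~\ref{lm:id.t}; the only points demanding care are the matching of the second component $\varepsilon_+$ produced by $n_\theta$ with the residual estimate in item~3 of Lemma~\ref{lm:bas}, and the verification of the third hypothesis $t_+<R$ in \eqref{eq:ddks}, which rests on chaining $t_+<\lambda\leq R$.
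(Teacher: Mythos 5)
Your proof is correct and follows exactly the route the paper intends: the paper's own proof is a one-line instruction to combine the definitions of $n_\theta$, $\Omega$, $\mathcal{N}_\theta$, $K(t,\varepsilon)$ and $K$ with Lemmas~\ref{lm:bas} and~\ref{lm:id.t}, and your write-up simply carries out that combination in full detail, including the two points that genuinely need checking (the identification of $\varepsilon_+$ with the bound in item~3 of Lemma~\ref{lm:bas}, and the verification $t_+<\lambda\leq\bar t\leq R$ for the third hypothesis in \eqref{eq:ddks}).
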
 
\begin{proof}
Combine definitions 
\eqref{eq:nftheta}, \eqref{eq:dt}, Definition \ref{def:inire},
 \eqref{eq:ker}, \eqref{eq:kt} 
 with  Lemmas \ref{lm:bas} and \ref{lm:id.t}.
\end{proof}
\section{Convergence analysis}\label{sec:cinmre}
\begin{theorem}
  \label{th:gc.ki.r}
  Take $0\leq\theta\leq\tildetheta$ and
  $N_\theta\in\mathcal{N}_\theta$.  For any
  $(t_0,\varepsilon_0)\in{\Omega}$ and $y_0\in K(t_0,\varepsilon_0)$
  the sequences
  \begin{equation}
    \label{eq:seq.02}
   y_{k+1}=N_\theta(y_k)  ,
\qquad
  ({t}_{k+1},\varepsilon_{k+1})=n_\theta({t}_k,\varepsilon_k) , \qquad k=0,1,\dots,
  \end{equation}
  are well defined,
  \begin{equation}
    \label{eq:in.02} 
  y_k\in K({t}_k,\varepsilon_k)   ,
    \qquad({t}_k,\varepsilon_k)\in {\Omega} \qquad k=0,1,\dots,
  \end{equation}
  the sequence $\{{t}_k\}$ is strictly increasing and converges to some
  ${\tilde  t}\in (0, \lambda ]$,
  the sequence $\{\varepsilon_k\}$ is non-decreasing and converges to
  some $\tilde \varepsilon\in [0, \kappa\lambda ]$,
  \begin{equation}
    \label{eq:bound.02} 
    \|F'(x_0)^{-1}F(y_k)\|\leq
    f({t}_k)+\varepsilon_k\leq
    \left(\frac{1+\theta^2}{2}\right)^{k}(f({t}_0)+\varepsilon_0),
    \quad k=0,1,\dots.
  \end{equation} 
  the sequence $\{y_k\}$ is contained in $B(x_0,\lambda )$ and
  converges to a point $x_*\in B[x_0,t_* ]$ which is the unique zero
  of $F$ in $B(x_0,\bar\tau)$ and
  \begin{equation}
    \label{eq:conv.02}
    \|y_{k+1}-y_k\|\leq {t}_{k+1}-{t}_k, \qquad \|x_{*}-y_k\|\leq
  {\tilde  t}-{t}_k, \qquad k= 0,1, \ldots \,.
  \end{equation}
  Moreover, if   
\begin{itemize}
  \item[{\bf h4')}] $\lambda<R$,
  \end{itemize}
 then  the sequence $\{y_k \}$ satisfies, for $k= 0,1,\ldots \,$
\begin{equation} \label{eq:qcyk}
\|x_*-y_{k+1}\|\leq \left[ \frac{1+\theta}{2}\frac{D^{-}f'(\lambda)}{|f'(\lambda)|}\|x_*-y_k\|
+\theta\,\frac{2+f'(\lambda)}{|f'(\lambda)|}\right]\|x_*-y_k\|.
\end{equation}
If, additionally, $0\leq \theta <\kappa/(4+\kappa)$ then  $\{y_k \}$ converges $Q$-linearly as  follows
  \begin{equation} \label{eq:qlyk}
  \norm{x_*-y_{k+1}}\leq \left[\frac{1+\theta}{2}+\frac{2\theta}{\kappa} \right]\norm{x_*-y_k}, \qquad k= 0,1, \ldots \,.
  \end{equation}
\end{theorem}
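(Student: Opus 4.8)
The plan is to iterate the single-step result of Proposition~\ref{pr:syn}, to control the scalar iterates via Lemma~\ref{lm:id.t}, and to read off the asymptotic rate from Lemma~\ref{pr:cq1}. I would first establish well-definedness together with the invariance~\eqref{eq:in.02} by induction. The base case is the hypothesis $(t_0,\varepsilon_0)\in{\Omega}$, $y_0\in K(t_0,\varepsilon_0)$. For the inductive step, $(t_k,\varepsilon_k)\in{\Omega}$ forces $\norm{y_k-x_0}\leq t_k<\lambda\leq\bar t$ (Proposition~\ref{pr:new}), so $y_k\in B(x_0,\bar t)$ and $y_{k+1}=N_\theta(y_k)$ is defined; Proposition~\ref{pr:syn} then delivers $n_\theta(t_k,\varepsilon_k)\in{\Omega}$ and $y_{k+1}\in K(n_\theta(t_k,\varepsilon_k))=K(t_{k+1},\varepsilon_{k+1})$, closing the induction. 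Lemma~\ref{lm:id.t} gives $t_k<t_{k+1}$ and $\varepsilon_k\leq\varepsilon_{k+1}$, so the scalar sequences are monotone; the constraints $t_k<\lambda$, $\varepsilon_k\leq\kappa t_k<\kappa\lambda$ built into ${\Omega}$ bound them, forcing convergence to $\tilde t\in(0,\lambda]$ and $\tilde\varepsilon\in[0,\kappa\lambda]$. The chain~\eqref{eq:bound.02} then follows by combining the defining inequality of $K(t_k,\varepsilon_k)$ with the geometric decay $f(t_{k+1})+\varepsilon_{k+1}<\tfrac{1+\theta^2}{2}\bigl(f(t_k)+\varepsilon_k\bigr)$ of Lemma~\ref{lm:id.t}, iterated from $k=0$.

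Next I would prove convergence of $\{y_k\}$. The bound $\norm{y_{k+1}-y_k}\leq t_{k+1}-t_k$ from Proposition~\ref{pr:syn} is the first estimate in~\eqref{eq:conv.02}; telescoping gives $\norm{y_{k+p}-y_k}\leq t_{k+p}-t_k$, so $\{y_k\}$ is Cauchy (as $\{t_k\}$ converges) with limit $x_*\in\banacha$, and letting $p\to\infty$ yields the second estimate $\norm{x_*-y_k}\leq\tilde t-t_k$ together with $\norm{x_*-x_0}\leq\tilde t\leq\lambda$. Since $0\leq\theta\leq\tildetheta<1$ (Proposition~\ref{pr:new}) the factor $(1+\theta^2)/2<1$, so~\eqref{eq:bound.02} forces $\norm{F'(x_0)^{-1}F(y_k)}\to0$; continuity of $F$ and nonsingularity of $F'(x_0)$ give $F(x_*)=0$. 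To locate $x_*$ I would apply the first inequality of Corollary~\ref{cr:new.01}, $-f(\norm{x_*-x_0})\leq\norm{F'(x_0)^{-1}F(x_*)}=0$, whence $f(\norm{x_*-x_0})\geq0$; combined with $\norm{x_*-x_0}\leq\lambda\leq\bar t$, the strict monotonicity of $f$ on $[0,\bar t)$ and $f(t_*)=0$ (Proposition~\ref{pr:maj.f}), this gives $\norm{x_*-x_0}\leq t_*$, i.e.\ $x_*\in B[x_0,t_*]$.

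For uniqueness, the same Corollary~\ref{cr:new.01} argument (the observation following it) shows every zero of $F$ in $B(x_0,\bar\tau)$ already lies in $B[x_0,t_*]$. Given two such zeros $x_*,y_*$, I would write $0=F'(x_0)^{-1}\bigl(F(y_*)-F(x_*)\bigr)=A(y_*-x_*)$ with $A=\int_0^1 F'(x_0)^{-1}F'\bigl(x_*+u(y_*-x_*)\bigr)\,du$; as the segment stays in $B[x_0,t_*]$, assumption~\eqref{MC.2} and \textbf{h1} give $\norm{A-I}\leq f'(t_*)+1<1$ (using $t_*<\bar t$), so Banach's Lemma makes $A$ invertible and $y_*=x_*$.

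Finally, under \textbf{h4')} the hypotheses of Lemma~\ref{pr:cq1} hold at each step with $x=y_k$, $y=y_{k+1}$, $t=t_k$ --- the crucial checks being $\norm{x_*-y_k}\leq\tilde t-t_k\leq\lambda-t_k$ from~\eqref{eq:conv.02} and the residual bound from $N_\theta\in\mathcal N_\theta$ --- yielding~\eqref{eq:qcyk}; its first inequality is~\eqref{eq:qlyk}, and a short computation shows $\tfrac{1+\theta}{2}+\tfrac{2\theta}{\kappa}<1$ precisely when $\theta<\kappa/(4+\kappa)$, the stated $Q$-linear regime. I expect the genuinely delicate points to be the self-contained Banach-lemma argument for uniqueness and the careful verification of all hypotheses of Lemma~\ref{pr:cq1} at each iterate; everything else is a bookkeeping assembly of the preceding results.
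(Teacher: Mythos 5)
Your overall architecture coincides with the paper's: induction via Proposition~\ref{pr:syn}, monotonicity and geometric decay of the scalar iterates from Lemma~\ref{lm:id.t}, a telescoping/Cauchy argument for convergence of $\{y_k\}$, and Lemma~\ref{pr:cq1} for the two rate estimates. Your uniqueness argument is in fact more self-contained than the paper's (which simply invokes the classical majorant-function Kantorovich theorem from \cite{FerreiraSvaiter2009}): combining the remark after Corollary~\ref{cr:new.01} (no zeros in $t_*<\|x-x_0\|<\bar\tau$) with a Banach-lemma bound $\|A-I\|\leq f'(t_*)+1<1$ for $A=\int_0^1F'(x_0)^{-1}F'\bigl(x_*+u(y_*-x_*)\bigr)\,du$ is correct and legitimate, since the segment stays in $B[x_0,t_*]$ and $t_*<\bar t$.

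There is, however, one genuine gap: the order in which you establish $F(x_*)=0$ and $\|x_*-x_0\|\leq t_*$. You first let $k\to\infty$ in $\|F'(x_0)^{-1}F(y_k)\|\to0$ and invoke ``continuity of $F$'' at $x_*$, and only afterwards apply Corollary~\ref{cr:new.01} at $y=x_*$ to locate it. Both steps presuppose $x_*\in B(x_0,R)$: the corollary is stated only for $y\in B(x_0,R)$, and $F$ is only assumed continuous on $C\supseteq B(x_0,R)$, not on the closed ball. A priori you only know $\|x_*-x_0\|\leq\tilde t\leq\lambda\leq R$, and the degenerate case $\tilde t=\lambda=R$ is not excluded by the hypotheses (condition \textbf{h4'} is not assumed for this part of the theorem), so the limit point could sit on the boundary sphere, outside the region where $F$ and $f$ are defined. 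The paper avoids this by reversing the order: it applies Corollary~\ref{cr:new.01} at the iterates $y_k\in B(x_0,\lambda)$ themselves, combines $-f(\|y_k-x_0\|)\leq\bigl(\tfrac{1+\theta^2}{2}\bigr)^k(f(t_0)+\varepsilon_0)$ with the linear lower bound $-f(t)\geq\kappa(t-t_*)$ on $[t_*,\lambda)$ (a consequence of $f'<-\kappa$ there and $f(t_*)=0$) to obtain $\|y_k-x_0\|-t_*\leq\bigl(\tfrac{1+\theta^2}{2}\bigr)^k(f(t_0)+\varepsilon_0)/\kappa$ for every $k$, and only then passes to the limit to get $\|x_*-x_0\|\leq t_*<\bar t\leq R$; once $x_*$ is known to be interior, $F(x_*)=0$ follows by taking limits in \eqref{eq:bound.02}. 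You should restructure your argument along these lines (or otherwise rule out $\tilde t=R$) before the continuity step. The remainder of your plan, including the verification of the hypotheses of Lemma~\ref{pr:cq1} at each iterate and the threshold computation $\tfrac{1+\theta}{2}+\tfrac{2\theta}{\kappa}<1\iff\theta<\kappa/(4+\kappa)$, matches the paper and is sound.
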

\begin{proof}
  Well definition of the sequences $\{(t_k,\varepsilon_k)\}$ and
  $\{y_k\}$ as defined in \eqref{eq:seq.02} follows from the
  assumptions on $\theta$, $(t_0,\varepsilon_0)$, $y_0$ and the two
  last inclusions on Proposition~\ref{pr:syn}.
  Moreover, since \eqref{eq:in.02} holds for $k=0$, using the first
  inclusion in Proposition~\ref{pr:syn} and induction on $k$, we
  conclude that \eqref{eq:in.02} holds for all $k$.
  The first inequality in \eqref{eq:conv.02} now follows from
  Proposition~\ref{pr:syn}, \eqref{eq:in.02} and \eqref{eq:seq.02}
  while the first inequality in \eqref{eq:bound.02} follows from
  \eqref{eq:in.02} and the definition of $K(t,\varepsilon)$ in
  \eqref{eq:ker}.

  Direct inspection of the definition of ${\Omega}$ in \eqref{eq:dt}
  shows that
  \[
   {\Omega}\subset [0,\lambda)\times [0,\kappa\lambda).
  \]
  Therefore, using \eqref{eq:in.02} and the definition of  $K(t,\varepsilon)$
  we have  
  \[
  t_k\in [0,\lambda),\quad \varepsilon_k\in [0,\kappa\lambda),
  \quad y_k\in B(x_0,\lambda),\qquad k=0,1,\dots
  \]
  Using \eqref{eq:dt} and Lemma~\ref{lm:id.t} we conclude that
  $\{t_k\}$ is strictly increasing, $\{\varepsilon_k\}$ is
  non-decreasing and the second equality in~\eqref{eq:bound.02} holds
  for all $k$. Therefore, in view of the first two above inclusions,
  $\{t_k\}$ and $\{\varepsilon_k\}$ converge, respectively, to some
  $\tilde t\in (0,\lambda]$ and $\tilde \varepsilon\in
  [0,\kappa\lambda]$. Convergence to $\tilde t$, together with the
  first inequality in \eqref{eq:conv.02} and the inclusion $y_k\in
  B(x_0,\lambda)$ implies that $y_k$ converges to some $x_*\in
  B[0,\lambda]$ and that the second inequality on \eqref{eq:conv.02}
  holds for all $k$.

  Using the inclusion $y_k\in B(x_0,\lambda)$, the first inequality in 
  Corollary~\ref{cr:new.01} and \eqref{eq:bound.02} we have
  \begin{equation*}
    -f(\Norm{y_k-x_0})\leq
  \left(\frac{1+\theta^2}{2}\right)^{k}(f({t}_0)+\varepsilon_0),
    \quad k=0,1,\dots.
  \end{equation*}
  According to \eqref{eq:bd.pr.b}, $f'<-\kappa$ in $[0,\lambda)$. Therefore,
  since $f(t_*)=0$ and $t_*<\lambda$,
  \[
  f(t)\leq -\kappa(t-t_*),\qquad t_*\leq t < \lambda.
  \]
  Hence, \emph{if} $\Norm{y_k-x_0}\geq t_*$, we can combine the two above
  inequalities, setting $t=\Norm{y_k-x_0}$ in the second, to obtain
  \[
  \Norm{y_k-x_0}-t_*\leq  
  \left(\frac{1+\theta^2}{2}\right)^{k}\frac{ f({t}_0)+\varepsilon_0}{\kappa}.
  \]
  Note that the above inequality remain valid even if $\Norm{y_k-x_0}<
  t_*$.  Therefore, taking the limit $k\to \infty$ in the above
  inequality we conclude that $\Norm{x_*-x_0}\leq t_*$. Moreover, now
  that we know that $x_*$ is in the interior of the domain of $F$, we
  can also take the limit $k\to\infty$ in \eqref{eq:bound.02} to
  conclude that $F(x_*)=0$. 

  The ``classical''  version of Kantorovich's theorem on Newton's method
  for a generic majorant function (see e.g. \cite{FerreiraSvaiter2009})
    guarantee that
  under the assumptions of Theorem~\ref{th:ki.r}, $F$ has a unique
  zero in $B(x_0,\bar\tau)$. Hence $x_*$ must be this zero of $F$.
  
To prove \eqref{eq:qcyk} and \eqref{eq:qlyk} , first note that from first inclusion in \eqref{eq:in.02}  we have $\|y_k-x_0\|\leq t_k$, for all $k= 0,1, \ldots \,.$ Now, since $\tilde t\in (0, \lambda ]$ we obtain from  second inequality in \eqref{eq:conv.02} that
 $ \|x_{*}-y_k\|\leq \lambda-{t}_k$,  for all $k= 0,1, \ldots \,.$ Therefore, using {\bf h4'}, $F(x_*)=0$ and  and first equality in \eqref{eq:seq.02}, the desire inequalities follows by applying  Lemma~\ref{pr:cq1}.  For concluding the proof,  note that for $0\leq \theta <\kappa/(4+\kappa)$  the quantity in the bracket in \eqref{eq:qlyk} is less than one, which implies that the sequence $\{y_k \}$ converges $Q$-linearly. 
\end{proof}
\begin{proposition}
  \label{pr:kappa}
  If $0\leq \rho<\beta/2$ then
  \[
  \rho<\bar t/2 <\bar t,\qquad f'(\rho)<0.
  \]
\end{proposition}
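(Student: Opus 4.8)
The plan is to read off both claims directly from the structural properties of the majorant function already recorded in Proposition~\ref{pr:maj.f}. The key observation is that the strict inequality $\beta<\bar t$ is precisely item iii of that proposition, so the bound on $\rho$ is essentially a halving of this known inequality, while the sign of $f'(\rho)$ is controlled by item i. No new estimate on $f$ is needed; everything reduces to bookkeeping on the constants $\beta$ and $\bar t$.

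First I would recall from Proposition~\ref{pr:maj.f} item iii that
\[
0<\beta<\bar t.
\]
Dividing the inequality $\beta<\bar t$ by $2$ gives $\beta/2<\bar t/2$, and since $\bar t>0$ we trivially have $\bar t/2<\bar t$. Combining these two facts with the standing hypothesis $\rho<\beta/2$ yields the chain
\[
\rho<\frac{\beta}{2}<\frac{\bar t}{2}<\bar t,
\]
which already contains the first assertion $\rho<\bar t/2<\bar t$.

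To obtain $f'(\rho)<0$, I would observe that the chain above shows in particular $\rho\in[0,\bar t)$: indeed $0\leq\rho$ by hypothesis and $\rho<\bar t$ by the displayed inequality. Proposition~\ref{pr:maj.f} item i asserts that $f'(t)<0$ for every $t\in[0,\bar t)$; applying this with $t=\rho$ gives $f'(\rho)<0$, which completes the argument.

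There is no genuine obstacle here, as the proposition is a direct corollary of Proposition~\ref{pr:maj.f}. The only point deserving a moment of care is verifying that $\rho$ indeed lands in the interval $[0,\bar t)$ on which $f'$ is already known to be strictly negative, and this is exactly what the halving of the inequality $\beta<\bar t$ delivers.
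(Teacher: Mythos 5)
Your argument is correct and matches the paper's own proof exactly: both derive $\rho<\beta/2<\bar t/2<\bar t$ from the inequality $0<\beta<\bar t$ in Proposition~\ref{pr:maj.f} item iii, and then apply item i on $[0,\bar t)$ to conclude $f'(\rho)<0$. Nothing further is needed.
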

\begin{proof}
  Assumption $\rho<\beta/2$ and Proposition~\ref{pr:maj.f} item iii
  proves the first two inequalities of the proposition. The last
  inequality follows from the first inequality and
  Proposition~\ref{pr:maj.f} item i.
\end{proof}
\begin{proof}[Proof of Theorem~\ref{th:ki.r}]
First we will prove Theorem~\ref{th:ki.r} with $\rho=0$ and $z_0=x_0$. Note
that, from the definition in \eqref{eq:dktt.0}, we have
\[
 \kappa_0=\kappa,\quad \lambda_0=\lambda,\quad \Theta_0=\Theta.
\]
Since 
\[ (0,0)\in {\Omega}, \qquad x_0\in K(0,0),
\]
using Theorem~\ref{th:gc.ki.r} we conclude that Theorem~\ref{th:ki.r}
holds for $\rho=0$.

For proving the general case, take
\begin{equation}
	\label{eq:hyp.f}
	0\leq \rho<\beta/2,\qquad z_0\in B[x_0,\rho]\;.
\end{equation}
Using Proposition~\ref{pr:kappa} and \eqref{eq:def.bart} we conclude that
$\rho<\bar t/2$ and $f'(\rho)<0$. Define
\begin{equation}
  \label{eq:maj.01}
  g:[0,R-\rho)\to \mathbb{R}, \quad
  g(t)=\frac{-1}{f'(\rho)}[f(t+\rho)+2\rho].
\end{equation}
We claim that $g$ is a majorant function for $F$ at point $z_0$.
Trivially, $B(z_0,R-\rho)\subset C$, $g'(0)=-1$, $g(0)>0$. Moreover
$g'$ is also convex and strictly increasing.  To end the proof that
$g$ satisfies {\bf h1}, {\bf h2} and {\bf h3}, using
Proposition~\ref{pr:maj.f} item iii and second inequality in
\eqref{eq:hyp.f} we have
\[ 
\lim_{t\to \bar t-\rho}g(t)=\frac{-1}{f'(\rho)}(2\rho-\beta)<0\;.
\]
Using Proposition~\ref{pr:banach} we have
\begin{equation}
  \label{eq:maj.02}
  \Norm{F'(z_0)^{-1}F'(x_0)}\leq \frac{-1}{f'(\rho)}\;.
\end{equation}
Therefore, using also the second inequality of
Corollary~\ref{cr:new.01} we have
\begin{align*}
  \Norm{F'(z_0)^{-1}F(z_0)}\leq &  \Norm{F'(z_0)^{-1}F'(x_0)}
  \Norm{F'(x_0)^{-1}F(z_0)}\\
  \leq & \frac{-1}{f'(\rho)} [f(\Norm{z_0-x_0}) +2\Norm{z_0-x_0}].
\end{align*}
As $f'\geq -1$, the function $t\mapsto f(t)+2t$ is (strictly)
increasing.  Combining this fact with the above inequality and
\eqref{eq:maj.01} we conclude that
\[
\Norm{F'(z_0)^{-1}F'(z_0)}\leq g(0).
\]
To end the proof that $g$ is a majorant function for $F$ at $z_0$,
take $x,y\in \banacha$ such that
\[
x,y\in  B(z_0,R-\rho),\qquad \Norm{x-z_0}+\Norm{y-x}<
R-\rho\;. 
\]
Hence $x,y\in B(x_0,R)$,
$\Norm{x-x_0}+\Norm{y-x}<R$ and using \eqref{eq:maj.02} together with \eqref{MC.2} we have
\begin{align*}
  \|F'(z_0)^{-1}\left[F'(y)-F'(x)\right]\| \leq&
  \|F'(z_0)^{-1}F'(x_0)\| \|F'(x_0)^{-1}\left[F'(y)-F'(x)\right]\|\\
  \leq &\frac{-1}{f'(\rho)}\left[f'(\|y-x\|+\|x-x_0\|)-f'(\|x-x_0\|)\right].
\end{align*}
Since $f'$ is convex, the function $t\mapsto f'(s+t)-f'(s)$ is
increasing for $s\geq 0$ and $\|x-x_0\|\leq
\norm{x-z_0}+\norm{z_0-x_0}\leq \norm{x-z_0}+\rho$,
\begin{multline*}
 f'(\|y-x\|+\|x-x_0\|)-f'(\|x-x_0\|)\leq\\
  f'(\|y-x\|+\|x-z_0\|+\rho)
   -f'(\|x-z_0\|+\rho).
\end{multline*}
Combining the two above inequalities with the definition of $g$ we obtain
\[
\|F'(z_0)^{-1}\left[F'(y)-F'(x)\right]\|\leq 
  g'(\|y-x\|+\|x-z_0\|)-g'(\|x-z_0\|).
\]

Note that for $\kappa_\rho$, $\lambda_\rho$ 
and $\Theta_\rho$ as defined in \eqref{eq:dktt}, we have
\[
  \kappa_\rho=\sup_{0<t<R-\rho}\frac{-g(t)}{t},\quad
   \lambda_\rho =\sup \{t\in [0,R-\rho): \kappa_\rho+g'(t)<0\},
  \quad \Theta_\rho=\frac{\kappa_\rho}{2-\kappa_\rho},
\]
which are the same as \eqref{eq:dktt} with $g$ instead of
$f$. Therefore, applying Theorem~\ref{th:ki.r} for $F$ and the
majorant function $g$ at point $z_0$ and $\rho=0$, we conclude that
the sequence $\{z_k\}$ is well defined, remains in
$B(z_0,\lambda_\rho)$, satisfies \eqref{eq:th.rt} and converges to
some $z_*\in B[z_0,t_{*,\rho}]$ which is a zero of $F$, where
$t_{*,\rho}$ is the smallest solution of $g(t)=0$. Using
\eqref{eq:maj.01} we conclude that $t_{*,\rho}$ is the smallest
solution of
\[
 f(\rho+t)+2\rho=0.
\]
Hence, in view of Proposition~\ref{pr:maj.f} item ii, we have
$\rho+t_{*,\rho}< \bar t\leq \bar \tau$.  and
$B[(z_0,t_{*,\rho}]\subset B(x_0,\bar \tau)$. Therefore, $z_*$ is the
unique zero of $F$ in $B(x_0,\bar \tau)$, which we already called
$x_*$. 
Since
\[
g'(t)=f'(t+\rho)/|f'(\rho)|,\;\;D^{-}g'(t)=D^{-}f'(t+\rho)/|f'(\rho)|,
\qquad t\in [0,R-\rho),
\]
applying again  Theorem~\ref{th:ki.r} for $F$ and the
majorant function $g$ at point $z_0$ and $\rho=0$, we conclude
that item  {\bf h4} also holds.
\end{proof}
\section{Special cases} \label{sec:scinmer}
First we use Theorem~\ref{th:ki.r} to analyze the convergence of the inexact
Newton method with a relative residual error tolerance in the setting of
Smale's $\alpha$-theory. Up to our knowledge, this is the first time an inexact
Newton method with a relative error tolerance is analyzed in this framework.

\begin{theorem}
  \label{th:kngesrs}
  Let $\banacha$ and $\banachb$ be Banach spaces, $C\subseteq
  \banacha$ and $F:{C}\to \banachb$ a continuous function and analytic
  $int(C)$.  Take $x_0\in \mathrm{int}(C )$ with $F '(x_0)$
  non-singular.  Define
  \[ 
  \gamma := \sup _{ n > 1 }\left\| \frac {F'(x_0)^{-1}F^{(n)}(x_0)}{n
      !}\right\|^{1/(n-1)}.
  \]
  Suppose that $B(x_0, 1/\gamma)\subset C$,  $b>0$ and
  that
  \[
  \|F'(x_0)^{-1}F(x_0)\|\leq b,\qquad b\gamma<3-2\sqrt{2}, \qquad
  0\leq\theta\leq\frac{1-2\sqrt{\gamma b}-\gamma b}{1+2\sqrt{\gamma
      b}+\gamma b}\;.  
  \]
  Then, the sequence generated by the inexact Newton method for solving
  $F(x)=0$ with starting point $x_0$ and residual relative error
  tolerance $\theta$: For $k=0,1,\ldots,$
  \[ x_{k+1}=x_k+S_k,\quad
  \Norm{F'(x_0)^{-1}\left[F(x_k)+F'(x_k)S_k\right]}\leq \theta
  \|F'(x_0)^{-1}F(x_k)\|,
  \]
  is well defined, the generated sequence $\{x_k\}$ converges to a
  point $x_*$ which is a zero of $F$,
  \[ \
  \|F'(x_0)^{-1}F(x_k)\|\leq\left(\frac{1+\theta^2}{2}\right)^{k}b,
  \qquad k= 0,1, \ldots \,,
  \]
 the sequence $\{x_k\}$ is contained in $B(x_0, \lambda )$,
  $x_*\in B[x_0, t_* ]$ and $x_*$ is the unique zero of $F$ in
  $B(x_0,\bar\tau)$, where
  \[
  \lambda:=\frac{b}{\sqrt{\gamma b}+\gamma b},
  \]
  \[
  t_*= \frac{1+\gamma b-\sqrt{1-6\gamma b +(\gamma b)^2}}{4},
  \qquad \bar \tau= \frac{1+\gamma b+\sqrt{1-6\gamma b +(\gamma b)^2}}{4}\;.
  \]
   Moreover, the sequence $\{x_k \}$ satisfies, for $k= 0,1, \ldots \,$, 
\[
\|x_*-x_{k+1}\|\leq \left[ \frac{1+\theta}{2}\frac{D^{-}f'(\lambda)}{|f'(\lambda)|}\|x_*-x_k\|
+\theta\,\frac{f'(\lambda)+2}{|f'(\lambda)|}\right]\|x_*-x_k\|.
\]
  If, additionally, $0\leq \theta <(1-2\sqrt{\gamma b}-\gamma b)/(5-2\sqrt{\gamma b}-\gamma b)$ then  $\{x_k \}$ converges $Q$-linearly as  follows
  \[
  \norm{x_*-x_{k+1}}\leq \left[\frac{1+\theta}{2}+\frac{2\theta}{1-2\sqrt{\gamma b}-\gamma b} \right]\norm{x_*-x_k}, \qquad k= 0,1,
  \ldots \,.
  \]
\end{theorem}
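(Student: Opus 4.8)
The plan is to exhibit a concrete majorant function $f$ for $F$ at $x_0$ coming from Smale's $\alpha$-theory, verify that it satisfies hypotheses {\bf h1}--{\bf h3} together with \eqref{MC.2} and \eqref{eq:KH.2}, and then read off the conclusion directly from Theorem~\ref{th:ki.r} applied with $\rho=0$ and $z_0=x_0$ (so that $\kappa_\rho,\lambda_\rho,\tildetheta_\rho$ collapse to $\kappa,\lambda,\tildetheta$). The natural candidate, dictated by the fact that $\sum_{n\ge 2}\gamma^{n-1}t^n=\gamma t^2/(1-\gamma t)$ majorizes the Taylor tail of $F$, is
\[
f:[0,1/\gamma)\to\mathbb{R},\qquad f(t)=b-t+\frac{\gamma t^2}{1-\gamma t},
\]
with $R=1/\gamma$ and $B(x_0,R)\subset C$ by hypothesis.

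The analytic heart of the argument, and the step I expect to be the main obstacle, is the verification of \eqref{MC.2}. For this I would first bound the second derivative: from analyticity and the definition of $\gamma$ one has $\Norm{F'(x_0)^{-1}F^{(n)}(x_0)/n!}\le\gamma^{n-1}$ for $n\ge 2$, and summing the (term-by-term differentiated) power series gives
\[
\Norm{F'(x_0)^{-1}F''(x)}\le \frac{2\gamma}{(1-\gamma\Norm{x-x_0})^3}=f''(\Norm{x-x_0}),\qquad \Norm{x-x_0}<1/\gamma.
\]
Integrating this estimate along the segment joining $x$ and $y$, exactly as in the proof of Lemma~\ref{pr:taylor}, then yields \eqref{MC.2}, while \eqref{eq:KH.2} is immediate from $\Norm{F'(x_0)^{-1}F(x_0)}\le b=f(0)$.

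It then remains to compute the scalar quantities explicitly. A direct computation gives $f'(t)=-1+\gamma t(2-\gamma t)/(1-\gamma t)^2$, so $f'(0)=-1$ and $f(0)=b>0$ ({\bf h1}), and $f''(t)=2\gamma/(1-\gamma t)^3>0$, $f'''(t)=6\gamma^2/(1-\gamma t)^4>0$, so $f'$ is strictly increasing and convex ({\bf h2}). Clearing denominators shows $f(t)=0$ is equivalent to $2\gamma t^2-(1+\gamma b)t+b=0$, whose discriminant $1-6\gamma b+(\gamma b)^2$ is nonnegative exactly when $\gamma b\le 3-2\sqrt2$; the strict inequality $\gamma b<3-2\sqrt2$ produces two distinct roots $t_*<\bar\tau$ of the stated form, giving {\bf h3}. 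For $\kappa=\sup_{0<t<1/\gamma}-f(t)/t$ I would use that the supremum is attained where the line $s\mapsto-\kappa s$ is tangent to the graph of $f$, i.e.\ at the point $\lambda$ with $f(\lambda)=-\kappa\lambda$ and $f'(\lambda)=-\kappa$; solving $b/t^2=\gamma/(1-\gamma t)^2$ gives $\lambda=b/(\sqrt{\gamma b}+\gamma b)$ and, substituting back, $\kappa=1-2\sqrt{\gamma b}-\gamma b$. Consequently $\tildetheta=\kappa/(2-\kappa)$ and $\kappa/(4+\kappa)$ reduce to $(1-2\sqrt{\gamma b}-\gamma b)/(1+2\sqrt{\gamma b}+\gamma b)$ and $(1-2\sqrt{\gamma b}-\gamma b)/(5-2\sqrt{\gamma b}-\gamma b)$, matching the stated admissibility condition on $\theta$ and the $Q$-linear threshold.

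With these identifications all the hypotheses of Theorem~\ref{th:ki.r} hold for the pair $(F,f)$, and invoking it at $\rho=0$, $z_0=x_0$ yields well-definedness of the iterates, the residual decay $\left((1+\theta^2)/2\right)^k b$, containment in $B(x_0,\lambda)$, convergence to the unique zero $x_*\in B[x_0,t_*]$ of $F$ in $B(x_0,\bar\tau)$, and the two final error inequalities verbatim (with $D^{-}f'(\lambda)=f''(\lambda)=2\gamma/(1-\gamma\lambda)^3$ since $f'$ is smooth here). The only genuine difficulty is the derivative estimate feeding \eqref{MC.2}; once the majorant $f$ is in place, the rest is the explicit evaluation of $\kappa$, $\lambda$, $t_*$ and $\bar\tau$ described above.
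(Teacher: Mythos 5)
Your proposal is correct and follows essentially the same route as the paper: exhibit the $\gamma$-theory majorant and invoke Theorem~\ref{th:ki.r} with $\rho=0$, $z_0=x_0$; your $f(t)=b-t+\gamma t^2/(1-\gamma t)$ is algebraically identical to the paper's $f(t)=t/(1-\gamma t)-2t+b$, and your verification of \eqref{MC.2} via the bound $\|F'(x_0)^{-1}F''(x)\|\leq 2\gamma/(1-\gamma\|x-x_0\|)^3$ together with the explicit evaluation of $\kappa,\lambda,t_*,\bar\tau$ merely fills in the details the paper delegates to \cite{FerreiraSvaiter2009}. Two small remarks: your computation correctly puts $4\gamma$, not $4$, in the denominators of $t_*$ and $\bar\tau$ (the paper's statement appears to carry a typo there), and you should note explicitly that hypothesis {\bf h4} needed for the last two estimates holds because $\gamma\lambda=\sqrt{\gamma b}/(1+\sqrt{\gamma b})<1$.
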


\begin{proof}
  Since the function  $f:[0,1/\gamma) \to \mathbb{R}$ 
  \[
   f(t)=\frac{t}{1-\gamma t}-2t+b,
  \] 
  is a majorant function for $F$ in $x_0$, \cite{FerreiraSvaiter2009}. Therefore, 
  all results follow from Theorem~\ref{th:ki.r}, applied to this
  particular context.
\end{proof}

A semi-local convergence result for Newton method is instrumental in the
complexity analysis of linear and quadratic minimization problems by means of
self-concordant functions \cite{NesterovNemirovskii1994}.  Also in this
setting, Theorem~\ref{th:ki.r} provides a semi-local convergence result for
Newton method with a relative error tolerance.

\begin{theorem}
  \label{th:scinmes-scr}
  Let $C\subseteq \mathbb{R}^n$ be an open convex set and let $g:C \to
  \mathbb{R}$ be an $a$-self-concordant function with parameter $a>0$.
   For $x\in C$, let
   \[
  \Norm{v}_{x}:=\sqrt{v^Tg''(x)v}, \quad v\in \mathbb{R}^n,
   \]
  \[
  W_r(x):=\{ z: \Norm{z-x}_{x}<r\},\quad W_r[x]:=\{ z:
  \Norm{z-x}_{x}\leq r\}.
  \]
  Suppose that $x_0\in C$, $g''(x_0)$ is non-singular, $b>0$ 
  \[ 
  \|g''(x_0)^{-1}g'(x_0)\|_{x_0}\leq b<3-2\sqrt{2},
   \qquad 0\leq\theta\leq  \frac{1-2\sqrt{b}-b}{1+2\sqrt{ b}+b}\;.
  \]  
  Then the sequence generated by the inexact Newton method for solving
  $g'(x)=0$ with starting point $x_0$ and residual relative error
  tolerance $\theta$: For $k=0,1,\ldots,$
  \[
  x_{k+1}=x_k+S_k,\quad
  \Norm{g''(x_0)^{-1}\left[g'(x_k)+g''(x_k)S_k\right]}_{x_0}\leq \theta
  \|g''(x_0)^{-1}g'(x_k)\|_{x_0},
  \]
  is well defined, converges to a point $x_*$
  which is the (unique, global) minimizer of $g$, 
  \[  \|g''(x_0)^{-1}g'(x_k)\|_{x_0}\leq\left(\frac{1+\theta^2}{2}\right)^{k}b,
  \qquad k= 0,1, \ldots \,,
  \]
 the sequence $\{x_k\}$ is contained in $W_{\lambda}(x_0)$
  and $x_*\in W_{t_*}(x_0)$, where
 \[ 
 \lambda:=\frac{b}{\sqrt{b}+b},\qquad t_*= \frac{1+\ b-\sqrt{1-6\ b +b^2}}{4}.
  \]
   Moreover, the sequence $\{x_k \}$ satisfies, for $k= 0,1, \ldots \,$, 
\[
\|x_*-x_{k+1}\|\leq \left[ \frac{1+\theta}{2}\frac{D^{-}f'(\lambda)}{|f'(\lambda)|}\|x_*-x_k\|
+\theta\,\frac{f'(\lambda)+2}{|f'(\lambda)|}\right]\|x_*-x_k\|.
\]
  If, additionally, $0\leq \theta <(1-2\sqrt{b}-b)/(5-2\sqrt{b}-b)$ then  $\{x_k \}$ converges $Q$-linearly as  follows
  \[
  \norm{x_*-x_{k+1}}\leq \left[\frac{1+\theta}{2}+\frac{2\theta}{1-2\sqrt{b}-b} \right]\norm{x_*-x_k}, \qquad k= 0,1,
  \ldots \,.
  \]
\end{theorem}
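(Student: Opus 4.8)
The plan is to recognize this as an instance of Theorem~\ref{th:ki.r} applied to the operator $F:=g'$, exactly as in the proof of Theorem~\ref{th:kngesrs}. First I would equip $\mathbb{R}^n$ with the inner product $\langle u,v\rangle_{x_0}=u^{T}g''(x_0)v$, so that $\norm{\cdot}_{x_0}$ is the associated norm and $(\mathbb{R}^n,\norm{\cdot}_{x_0})$ is a (Hilbert, hence Banach) space. With $F=g'$ one has $F'(x)=g''(x)$ and $F'(x_0)^{-1}F'(x_0)=I$, so all the preconditioned quantities $F'(x_0)^{-1}F(x)=g''(x_0)^{-1}g'(x)$ and the residual tolerance in the statement are measured precisely in $\norm{\cdot}_{x_0}$. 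The candidate majorant function is $f:[0,1)\to\mathbb{R}$, $f(t)=t/(1-t)-2t+b$, so that $R=1$ and $B(x_0,R)=W_1(x_0)$. The containment $B(x_0,R)\subseteq C$ required by Theorem~\ref{th:ki.r} is the standard fact that the open unit Dikin ellipsoid of a self-concordant function lies in its domain.

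Next I would verify that $f$ is a majorant function for $F$ at $x_0$. Hypotheses {\bf h1}--{\bf h2} are immediate: $f(0)=b>0$, $f'(t)=(1-t)^{-2}-2$ gives $f'(0)=-1$, while $f''(t)=2(1-t)^{-3}>0$ and $f'''(t)=6(1-t)^{-4}>0$ show that $f'$ is strictly increasing and convex. For {\bf h3}, note that $f(t)=0$ is equivalent to $2t^2-(1+b)t+b=0$, whose discriminant $1-6b+b^2$ is positive precisely when $b<3-2\sqrt2$, so $f$ has two roots in $(0,1)$ and is negative between them; these roots are exactly the stated $t_*$ and $\bar\tau$. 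Condition \eqref{eq:KH.2} holds because $\norm{g''(x_0)^{-1}g'(x_0)}_{x_0}\leq b=f(0)$.

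The heart of the argument, and the main obstacle, is the verification of the majorant inequality \eqref{MC.2}, that is
\[
\norm{g''(x_0)^{-1}[g''(y)-g''(x)]}_{x_0}\leq f'(\norm{y-x}_{x_0}+\norm{x-x_0}_{x_0})-f'(\norm{x-x_0}_{x_0})
\]
whenever $\norm{x-x_0}_{x_0}+\norm{y-x}_{x_0}<1$. The operator $g''(x_0)^{-1}[g''(y)-g''(x)]$ is self-adjoint for $\langle\cdot,\cdot\rangle_{x_0}$, so its $\norm{\cdot}_{x_0}$-operator norm equals its spectral radius, i.e. the largest-in-modulus generalized eigenvalue of $g''(y)-g''(x)$ relative to $g''(x_0)$. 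Since $f'(s)-f'(t)=(1-s)^{-2}-(1-t)^{-2}$, the inequality asserts that the variation of the Hessian, measured at $x_0$, is dominated by the increment of the barrier function $t\mapsto(1-t)^{-2}$ along the path $x_0\to x\to y$. This is precisely the classical self-concordance estimate on the variation of the Hessian in the local norm; in the normalization built into $\norm{\cdot}_{x_0}$ the self-concordance parameter $a$ drops out, which is why none of the final constants depends on $a$. I would invoke this estimate from the standard theory of self-concordant functions (cf.~\cite{NesterovNemirovskii1994}), as was done for the corresponding statement in \cite{FerreiraSvaiter2009}.

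Finally, with $f$ confirmed as a majorant function I would apply Theorem~\ref{th:ki.r} with $\rho=0$ and $z_0=x_0$. A direct computation of the quantities in \eqref{eq:dktt.0} yields $\kappa=\sup_{0<t<1}-f(t)/t=1-2\sqrt b-b$ (the supremum being attained at $t=\sqrt b/(1+\sqrt b)$), $\lambda=\sqrt b/(1+\sqrt b)=b/(\sqrt b+b)$, and $\Theta=\kappa/(2-\kappa)=(1-2\sqrt b-b)/(1+2\sqrt b+b)$, which is exactly the admissible range of $\theta$. The condition $b<3-2\sqrt2$ forces $\kappa>0$ and $\lambda<1=R$, so {\bf h4} holds automatically and the last two displayed inequalities follow verbatim from the penultimate and final inequalities of Theorem~\ref{th:ki.r} (with $\rho=0$, whence $|f'(\rho)|=1$ and $f'(\lambda_\rho+\rho)=f'(\lambda)$); the extra requirement $\theta<\kappa/(4+\kappa)=(1-2\sqrt b-b)/(5-2\sqrt b-b)$ is precisely the $Q$-linear condition. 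Translating the balls in $\norm{\cdot}_{x_0}$ into the sets $W_r(x_0),W_r[x_0]$, and using that a zero of $F=g'$ is the unique global minimizer of the strictly convex function $g$, yields all the stated conclusions.
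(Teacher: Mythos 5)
Your proposal is correct and follows essentially the same route as the paper: the paper's proof consists of the single observation that $f(t)=t/(1-t)-2t+b$ on $[0,1)$ is a majorant function for $g'$ at $x_0$ (citing \cite{FerreiraSvaiter2009} for this fact) and then invokes Theorem~\ref{th:ki.r}. You simply fill in the verifications of {\bf h1}--{\bf h4} and the explicit computations of $\kappa$, $\lambda$, $\Theta$, $t_*$, $\bar\tau$ that the paper leaves implicit, deferring the key self-concordance estimate for \eqref{MC.2} to the same reference, so there is nothing substantively different.
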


\begin{proof} 
The scalar function $f:[0,1) \to
\mathbb{R}$ defined by
\[ f(t)=\frac{t}{1-t}-2t+b,
\] is a majorant function for $g'$ in $x_0$ , \cite{FerreiraSvaiter2009}. Therefore, the proof
follows from Theorem~\ref{th:ki.r}, applied to this particular context.
\end{proof}

\begin{theorem}
  \label{th:kngerl}
  Let $\banacha$ and $\banachb$ be a Banach spaces, $C\subseteq
  \banacha$ and $F:{C}\to \banachb$ a continuous function,
  continuously differentiable on $int(C)$.  Take $x_0\in
  \mathrm{int}(C )$ with $F '(x_0)$ non-singular.  Suppose that exist
  constants $L>0$ and $b>0$ such that $bL<1/2$, $B(x_0, 1/L)\subset C$
  and
  \[ \left\|F'(x_0)^{-1}\left[F'(y)-F'(x)\right]\right\| \leq L
  \|x-y\|,\qquad x,\, y\in B(x_0, 1/L),
  \]
  \[ \|F'(x_0)^{-1}F(x_0)\|\leq b,\qquad
  0\leq \theta\leq \frac{1-\sqrt{2bL}}{1+\sqrt{2bL}}
  \] 
  Then,  the sequence generated
  by the inexact Newton method for solving $F(x)=0$ with starting point
  $x_0$ and residual relative error tolerance $\theta$: For $k=0,1,\ldots,$
  \[
     x_{k+1}=x_k+S_k, \quad \Norm{F'(x_0)^{-1}\left[F(x_k)+F'(x_k)S_k\right]}\leq \theta
  \|F'(x_0)^{-1}F(x_k)\|,
  \]
  is well defined,
    \[ 
  \|F'(x_0)^{-1}F(x_k)\|\leq\left(\frac{1+\theta^2}{2}\right)^{k}b,
  \qquad k= 0,1, \ldots \,.
  \]
   the sequence $\{x_k\}$ is contained in
  $B(x_0, \lambda )$,  converges to a point  $x_*\in B[x_0, t_*]$
  which is the unique zero of $F$ in $B(x_0,1/L)$ where
  \[
  \lambda :=\frac{\sqrt{2bL}}{L},\quad
  t_*=\frac{1-\sqrt{1-2Lb}}{L}.
  \]
  Moreover,  the sequence $\{x_k \}$ satisfies, for $k= 0,1, \ldots \,$, 
\[
\|x_*-z_{k+1}\|\leq \left[ \frac{1+\theta}{2}\frac{L}{1-\sqrt{2bL}}\,\|x_*-x_k\|
+\theta\,\frac{1+\sqrt{2bL}}{1-\sqrt{2bL}}\right]\|x_*-x_k\|.
\]
  If, additionally, $0\leq \theta <(1-\sqrt{2bL})/(5-\sqrt{2bL})$ then the sequence  $\{x_k \}$ converges $Q$-linearly as  follows
  \[
  \norm{x_*-x_{k+1}}\leq \left[\frac{1+\theta}{2}+\frac{2\theta}{1-\sqrt{2bL}} \right]\norm{x_*-x_k}, \qquad k= 0,1,
  \ldots \,.
  \]
\end{theorem}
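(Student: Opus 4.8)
The plan is to reduce Theorem~\ref{th:kngerl} to Theorem~\ref{th:ki.r} by exhibiting an explicit quadratic majorant function, exactly as was done for the self-concordant and $\alpha$-theory cases. I would take
\[
 f:[0,1/L)\to\mathbb{R},\qquad f(t)=\frac{L}{2}t^2-t+b,
\]
so that $R=1/L$. The first task is to verify that $f$ is a majorant for $F$ at $x_0$. Conditions {\bf h1} and {\bf h2} are immediate: $f(0)=b>0$, while $f'(t)=Lt-1$ gives $f'(0)=-1$, and $f'$ is affine, hence convex and (since $L>0$) strictly increasing. For {\bf h3}, the roots of $f$ are $(1\pm\sqrt{1-2Lb})/L$, which are real and distinct because $bL<1/2$, so $f<0$ strictly between them. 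The majorant inequality \eqref{MC.2} holds \emph{with equality}, since $f'(\|y-x\|+\|x-x_0\|)-f'(\|x-x_0\|)=L\|y-x\|$ matches the assumed Lipschitz bound; and \eqref{eq:KH.2} is precisely the hypothesis $\|F'(x_0)^{-1}F(x_0)\|\leq b=f(0)$.

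Next I would compute the constants of \eqref{eq:dktt.0} and check that they coincide with those in the statement. A short optimization of $-f(t)/t=1-(L/2)t-b/t$ over $(0,1/L)$ produces the interior maximizer $t=\sqrt{2b/L}$ and hence $\kappa=1-\sqrt{2bL}$; consequently $\tildetheta=\kappa/(2-\kappa)=(1-\sqrt{2bL})/(1+\sqrt{2bL})$, which is exactly the upper bound imposed on $\theta$. Solving $\kappa+f'(t)<0$, i.e.\ $Lt<\sqrt{2bL}$, yields $\lambda=\sqrt{2bL}/L$, and the smallest root is $t_*=(1-\sqrt{1-2Lb})/L$. One subtlety deserves attention: the larger root $(1+\sqrt{1-2Lb})/L$ exceeds $R=1/L$, so $f<0$ throughout $(t_*,1/L)$ and therefore $\bar\tau=\sup\{t\in[0,R):f(t)<0\}=1/L$; this is exactly why uniqueness can only be asserted in $B(x_0,1/L)$.

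Finally, since $f'$ is affine, $D^{-}f'(\lambda)=L$ and $|f'(\lambda)|=|L\lambda-1|=1-\sqrt{2bL}$, so the two bracket coefficients in the penultimate inequality become $D^{-}f'(\lambda)/|f'(\lambda)|=L/(1-\sqrt{2bL})$ and $(f'(\lambda)+2)/|f'(\lambda)|=(1+\sqrt{2bL})/(1-\sqrt{2bL})$, matching the statement; likewise the $Q$-linear threshold $\theta<\kappa/(4+\kappa)$ reads $\theta<(1-\sqrt{2bL})/(5-\sqrt{2bL})$ and the rate bracket becomes $\tfrac{1+\theta}{2}+2\theta/(1-\sqrt{2bL})$. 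Hypothesis {\bf h4} with $\rho=0$ is $\lambda<R$, i.e.\ $\sqrt{2bL}<1$, which is again $bL<1/2$. With every hypothesis verified, applying Theorem~\ref{th:ki.r} at $z_0=x_0$ and $\rho=0$ delivers all conclusions. The only real work is the constant bookkeeping, and I expect the main (minor) obstacle to be the $\bar\tau=R$ capping together with the clean evaluation of $\kappa$ through the $\sqrt{2b/L}$ maximizer—both routine once the quadratic is written down.
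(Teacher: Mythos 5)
Your proposal is correct and follows exactly the paper's route: the paper's own proof simply declares $f(t)=\tfrac{L}{2}t^2-t+b$ a majorant function for $F$ at $x_0$ and invokes Theorem~\ref{th:ki.r}. Your verification of {\bf h1}--{\bf h4} and the computation of $\kappa=1-\sqrt{2bL}$, $\lambda=\sqrt{2bL}/L$, $t_*$, $\bar\tau=1/L$ and the bracket coefficients supplies precisely the bookkeeping the paper leaves implicit, and all the constants check out.
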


\begin{proof} 
Since the function  $f:[0,1/L)\to \mathbb{R},$
\[
 f(t):=\frac{L}{2}\;t^2-t+b,
\]
is a majorant function for $F$ at point $x_0$, all result follow from Theorem~\ref{th:ki.r}, applied to this particular context.
\end{proof}

\bibliographystyle{abbrv}

\def\cprime{$'$}

\end{document}